\theoremstyle{plain}
 \newtheorem{theo}{Theorem}[section]
\theoremstyle{plain}
\newtheorem{corollary}[theo]{Corollary}
\newtheorem{proposition}[theo]{Proposition}
\newtheorem{definition}[theo]{Definition}
\newtheorem{remark}[theo]{Remark}
\newtheorem{example}[theo]{Example}
\newcommand{\beq}{\begin{equation}}
\newcommand{\eeq}{\end{equation}}
\renewcommand{\i}{\iota}
\newcommand{\C}{\mathbb{C}}
\newcommand{\R}{\mathbb{R}}
\renewcommand{\H}{\mathbb{H}}
\newcommand{\Z}{\mathbb{Z}}
\newcommand{\bN}{\mathbb{N}}
\newcommand{\bS}{\mathbb{S}}
\newcommand\U{\mathrm{U}}
\newcommand{\cI}{\mathcal{I}}
\newcommand{\cS}{\mathcal{S}}
\newcommand{\ra}{\rightarrow}
\renewcommand{\square}{\kern1pt\vbox
{\hrule height 0.6pt\hbox{\vrule width 0.6pt\hskip 3pt
\vbox{\vskip 6pt}\hskip 3pt\vrule width 0.6pt}\hrule height0.6pt}\kern1pt}
\DeclareMathOperator\Id{Id}
\renewcommand\Re{\operatorname{Re}}
\renewcommand\Im{\operatorname{Im}}
\newcommand{\res}{{\rm res}\,}
\newcommand{\id}{{\rm id}\,}
\renewcommand{\Re}{{\rm Re}}
\renewcommand{\Im}{{\rm Im}}
\newcommand{\ol}{\overline}
\renewcommand{\mod}{\operatorname{mod}}
\newcommand{\be}{\begin{equation}}
\newcommand{\ee}{\end{equation}}
\def\<#1,#2>{\langle\,#1,\,#2\,\rangle}
\newcommand{\arr}{\begin{array}{rlll}}
\newcommand{\ea}{\end{array}}
\newcommand{\bea}{\begin{eqnarray}}
\newcommand{\eea}{\end{eqnarray}}
\newcommand{\bean}{\begin{eqnarray*}}
\newcommand{\eean}{\end{eqnarray*}}
\def\sideremark#1{\ifvmode\leavevmode\fi\vadjust{
\vbox to0pt{\hbox to 0pt{\hskip\hsize\hskip1em
\vbox{\hsize3cm\tiny\raggedright\pretolerance10000
\noindent #1\hfill}\hss}\vbox to8pt{\vfil}\vss}}}
\newcounter{ssig}
\newcounter{ttig}
\begin{document}

\title[$*$-logarithm]{On a definition of logarithm of quaternionic functions }
\author{Graziano Gentili}
\address{DiMaI,     Universit\`a di Firenze, Viale Morgagni 67/A,\ Firenze, Italy}
\email { graziano.gentili@unifi.it }
\author{Jasna Prezelj}
\address{Fakulteta za matematiko in fiziko Jadranska 21 1000
  Ljubljana, Slovenija, UP FAMNIT, Glagolja\v ska 8, Koper Slovenija}
\email { jasna.prezelj@fmf.uni-lj.si}
\author {Fabio  Vlacci}\address{DiSPeS Universit\`a di Trieste Piazzale Europa 1,\ Trieste,
  Italy} \email{ fvlacci@units.it} \thanks{}
\begin{abstract}
For a slice--regular quaternionic function $f,$ the classical exponential function $\exp f$ is not slice--regular in general.
An alternative definition of exponential function, the $*$-exponential $\exp_*$, was given: if $f$ is a slice--regular function, then $\exp_*(f)$ is a slice--regular function as well. The study of a $*$-logarithm $\log_*(f)$ of a slice--regular function $f$ becomes of great interest for basic reasons, and
is performed in this paper. The main result shows that the existence of such a $\log_*(f)$ depends only on the structure of the zero set of the vectorial part $f_v$ of the slice--regular function $f=f_0+f_v$, besides the topology of its domain of definition.
We also show that, locally, every slice--regular nonvanishing
function has a $*$-logarithm and, at the end, we present an example of a
nonvanishing slice--regular function on a ball which does not admit a
$*$-logarithm on that ball.
\end{abstract}

\keywords{Regular functions over quaternions, quaternionic logarithm of slice--regular functions} 

\makeatletter
\@namedef{subjclassname@2020}{%
  \textup{2020} Mathematics Subject Classification}
\makeatother
\subjclass[2020]{30G35;  32A30; 33B10}


\maketitle

\section{Introduction}
Let $\H$ be the skew field of quaternions and let us denote the $2$-sphere of imaginary units of $\H$ by $\bS=\{q\in \H : q^2=-1\}$  . Consider the natural exponential function $\exp \colon \H \to \H\setminus\{0\}$ defined by the classical power series:
\begin{equation}\label{classicexp}
\exp(q)=\sum_{n=1}^{+\infty}\frac{q^n}{n!}
\end{equation}
In the case of quaternions, a satisfactory definition of a (necessarily local) inverse of this exponential function - the logarithm and its different branches - is not a simple  task, together with the question of the continuation of the logarithm along curves lying in $\H\setminus\{0\}$ (see \cite{GA, paper-2, paper-3} and references therein).

Let $\Omega\subseteq \H$ be an axially symmetric domain (see Definition \ref{ASD}), and consider the class $\mathcal{SR}(\Omega)$ of all $\H$-valued slice--regular functions defined in $\Omega$ (see, e.g.,\cite{GS}). These functions have proven to be naturally suitable to play the role of holomorphic functions in the quaternionic setting, and have originated a theory that is by now quite rich and well developed (see, e.g., \cite{GSS} and references therein). Slice regular functions present several peculiarities, mainly due to the noncommutative setting of quaternions; among these peculiarities, the facts that pointwise product and composition of slice--regular functions do not produce slice--regular functions in general. The definition of the $*$-product typical of the algebra of polynomials with coefficients in a non commutative field can be extended to the class of slice--regular functions on  an axially symmetric domain $\Omega\subseteq \H$, which naturally becomes an algebra. As for composition, if $f\colon\H \to \H$ is a slice--regular function, even
\[
\exp(f(q)) = \sum_{n=1}^{+\infty}\frac{f(q)^n}{n!}
\]
turns out not to be slice--regular in general. The $*$-product helps in this situation to find an exponential function which maintains slice--regularity, defined (with obvious notations) as
\begin{eqnarray}\label{*exp}
\exp_*(f(q)) = \sum_{n=1}^{+\infty}\frac{f(q)^{*n}}{n!}
\end{eqnarray}
This $*$-exponential, defined and studied in~\cite{CSS} and further investigated in~\cite{AdF}, has many interesting properties typical of an exponential-type function.

In this paper we investigate the existence of a slice--regular  logarithm $\log_*(f)$ for
a slice--regular function $f$. This activity finds a deep motivation in the study of quaternionic Cousin problems, that the authors are performing and that will be the object of a forthcoming paper.

We will now briefly outline the path that this paper follows for the tuning of a slice--regular logarithm. Recall that any slice--regular
function $f$ defined on an axially symmetric domain $\Omega$ can be uniquely  written as
\[f=  f_0+ if_1+jf_2+kf_3=f_0 + f_v \]
where $\{1,i,j,k\}$ denotes the standard basis of $\H$, where $$f_0(q) = \frac{f(q)+ \overline{f(\bar{q})}}2$$ is the \emph{scalar} part of $f$ and  $$f_v:= f - f_0$$ its \emph{vectorial part}. The vectorial part $f_v$ of $f$ plays a fundamental role in the definition of $\log_*$. Indeed, with the adopted notations we have 
\begin{eqnarray}\label{espression for exp}
  \exp_*(f)&=& \exp_*(f_0+f_v)=\exp(f_0)\exp_*(f_v) \notag \\
  &=&\exp(f_0)\left(
  \sum_{m\in\mathbb{N}} \dfrac{(-1)^m(f_v^s)^m}{(2m)!}+
\sum_{m\in\mathbb{N}} \dfrac{(-1)^m(f_v^s)^m}{(2m+1)!}f_v
 \right)\\ \notag
 &=& \exp(f_0)\left(\cos(\sqrt{f_v^s})+\sin(\sqrt{f_v^s})\dfrac{f_v}{\sqrt{f_v^s}}\right)
\end{eqnarray}
 when  the symmetrization $f_v^s:=
f_1^2+ f_2^2+ f_3^2$ of $f_v$ does not vanish, and where the definitions of $\cos,$  $\sin$ and $\sqrt{f_v^s}$ are the natural ones. A less algebraic, but maybe more enlightening, point of view is the following. To better understand the computation of $\exp_*(f_v)$ let us notice that, since it holds, 
$$f_v*f_v=-f^s_v=-f_v*f^c_v$$ then, outside the zero set of $f^s_v$, we have
\[
\dfrac{f_v}{\sqrt{f_v^s}}*\dfrac{f_v}{\sqrt{f_v^s}}= \dfrac{-f^s_v}{f_v^s}=-1
\]
identically. Therefore the vectorial function $$\dfrac{f_v}{\sqrt{f_v^s}}$$ can be given the role of an
 imaginary unit, and therefore
$$
\exp_*(f_v)=\exp_*(\dfrac{f_v}{\sqrt{f_v^s}}{\sqrt{f_v^s}}) = \cos(\sqrt{f_v^s})+\sin(\sqrt{f_v^s})\dfrac{f_v}{\sqrt{f_v^s}}
$$
All this said, we begin by focusing our study of the solutions $f$ of the equation $$\exp_*f = g$$ to the case of  $\exp_*f =1 $
on an axially symmetric domain $\Omega$ whose intersection $\Omega_I$ with $\R+I\R\cong \C_I$ is ``small" for any $I\in \bS$. We  then proceed to the
definition of a local  $*$-logarithm for any slice--regular function on
such a domain.
As one may expect, once the function $\log_*g$ is defined, we can also define the real powers of $g$, like for example
    \begin{equation}\label{radix}
      \sqrt[s]{g} := \exp_*{\left(\frac{1}{s}\log_{*}g\right)},
 \end{equation}
    for  all $s\in\mathbb{R}, s > 0$.

It turns out that the structure of the zeroes of the
vectorial part $g_v$ of the slice--regular function $g\colon\Omega \to \H$ in question plays a key role. Roughly speaking, the set $Z(g_v)$ of nonreal and nonspherical zeroes of the vectorial part $g_v$ of $g$ (shared with the entire \emph{vectorial (equivalence) class $[g_v]$} and for this reason denoted $Z([g_v])$, see Definition \ref{vec_class})  determines the right conditions for the existence of the
$*$-logarithm of $g$ in such a domain $\Omega$.
In the chosen setting, a slice--regular function $g\colon \Omega \to \H$ belongs to the vectorial class $[0]$ if and only if its vectorial part $g_v$ is equivalent to the null function in $\Omega$, that is, if and only if $g$ belongs to the same vectorial class of its scalar part $g_0$. 
This situation is particularly fortunate for our study, as explicitly suggested by Formula~\eqref{espression for exp}.


The set of all slice--regular functions $g\in \mathcal{SR}(\Omega)$ which are in the vectorial class $[0]$ is  denoted by  $\mathcal{SR}_{[0]}(\Omega) =\mathcal{SR}_{\R}(\Omega).$ In general,  
$\mathcal{SR}_{\omega}(\Omega)$ will denote the set of slice--regular functions $g\in \mathcal{SR}(\Omega)$
whose vectorial parts  $g_v$ are in the  class $\omega$ (see Section \ref{vectclass}).

For the existence of a $*$-logarithm of a function $g\in \mathcal{SR}(\Omega)$, a sort of slicewise simple--connectedness of the axially symmetric domain $\Omega$ is required (but is not in general a sufficient condition): indeed we will require that each of the, at most two, connected components of $\Omega_I=\Omega \cap \C_I$ is simply connected for one (and hence for all) $I\in \bS$. Such a domain $\Omega$ will be called \emph{a basic domain}.  If $W\subseteq \H$ is any subset, then we will set the notation $\bS W:=\{sw : s\in \bS, w\in W\}$ and use it henceforth.

The main theorem of this paper, stated below and proved in Section~\ref{6} (Subsection~\ref{6.1}) together with some of its consequences, identifies sufficient conditions for the existence of a $*$-logarithm of a function $g\in \mathcal{SR}(\Omega)$ with respect to the different structures of the vectorial class $[g_v]$ and of
its zero set $Z([g_v])$.
 \begin{theo}\label{general} Let $\Omega\subseteq \H$ be a basic domain and let $g \in \cS\mathcal{R}_{\omega}({\Omega})$ be a nonvanishing function.  Then it holds:
 \begin{enumerate}
  \item[$(a)$] if $\omega = [0]$,   a necessary and sufficient condition
    for the  existence of a $*$-logarithm of $g$ on $\Omega$, $\log_*g \in \mathcal{SR}_{[0]}(\Omega),$
    is $$g(\Omega \cap \R) \subset (0,+\infty);$$
   \item[$(b)$] if $\omega \ne [0]$, then  if  $Z(\omega) = \varnothing$ or if \ $\bS Z(\omega) = \Omega$   there are no conditions, and a $*$-logarithm of $g$ on $\Omega$, $\log_*g \in \mathcal{SR}_{\omega}(\Omega)$, always exists;
    \item[$(c)$] if $\omega \ne [0]$ and $Z(\omega)$ is  discrete,  a sufficient  condition for the  existence of
      a $*$-logarithm of $g$ on $\Omega$, $\log_*g \in \mathcal{SR}_{\omega}(\Omega)$, is the validity of both inclusions
      \begin{equation}\label{realimage}
        \sqrt{g^s}(\Omega \cap \R) \subset (0,+\infty)
      \end{equation}
      and
    \begin{equation}\label{counterex}
      \frac{g_0}{\sqrt{g^s}}(\Omega) \subset \H \setminus (-\infty,-1].
    \end{equation}
 \end{enumerate}
where $g^s=g_0^2+g_v^s$ denotes the symmetrization of $g$.
\end{theo}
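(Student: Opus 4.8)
The plan is to reduce everything to the scalar--vectorial splitting $g=g_0+g_v$ and the formula \eqref{espression for exp}, treating the three cases according to the structure of $[g_v]$ and $Z(\omega)$. First I would fix an imaginary unit $I\in\bS$ and a connected component of $\Omega_I$; since $\Omega$ is a basic domain this component is a simply connected planar domain, and a slice--regular function $g$ is determined by its restriction there, which is a genuine holomorphic $\C_I\oplus\C_I j$--valued function. The strategy is: (i) find a holomorphic candidate $h$ on $\Omega_I$ with $\exp_*h=g$, and (ii) check that $h$ extends to a slice--regular function on all of $\Omega$ (i.e. satisfies the representation/symmetry condition), landing in the prescribed class $\mathcal{SR}_\omega(\Omega)$.

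For case $(a)$, when $\omega=[0]$ the function $g$ lies in $\mathcal{SR}_\R(\Omega)=\mathcal{SR}_{[0]}(\Omega)$, so up to vectorial equivalence $g=g_0$ behaves like an ordinary holomorphic function on the simply connected $\Omega_I$; a $*$-logarithm in the same class is forced to be $\log_* g=\log g_0$ in the appropriate sense, and the obstruction is exactly the classical one — a single-valued holomorphic branch of $\log g_0$ exists on the simply connected $\Omega_I$, but to descend to a slice--regular function on $\Omega$ one needs the branch to be real on $\Omega\cap\R$, which (since $g_0$ is real there and $\exp$ maps $\R$ onto $(0,+\infty)$) happens precisely when $g(\Omega\cap\R)\subset(0,+\infty)$. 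I would spell out necessity by evaluating at a real point and using injectivity of $\exp$ on $\R$, and sufficiency by constructing the branch on $\Omega_I$ that is real on $\Omega_I\cap\R$ and invoking the representation formula to get slice--regularity.

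For case $(b)$ the point is that when $Z(\omega)=\varnothing$ or $\bS Z(\omega)=\Omega$ the vectorial unit $f_v/\sqrt{f_v^s}$ attached to $[g_v]$ is, respectively, globally well defined and nowhere singular, or else identically problematic in a way that trivializes the symmetrization — in either case the computation \eqref{espression for exp} shows $\exp_*$ behaves like a one-variable exponential with values in the plane $\R+\R\,\mathbf{u}$ for the relevant imaginary unit function $\mathbf u$, and one can solve $\exp_* h=g$ with no residual topological obstruction because there is essentially one ``complex'' variable and $\Omega_I$ is simply connected. Concretely I would write $g=g_0+g_v$, set $\rho=\sqrt{g^s}$ on the slice, choose an argument $\theta$ with $g_0=\rho\cos\theta$, $\sqrt{g_v^s}=\rho\sin\theta$ (using $g^s=g_0^2+g_v^s$), and take $h=\log\rho + \theta\,\dfrac{g_v}{\sqrt{g_v^s}}$; simple connectedness of $\Omega_I$ gives a continuous (hence holomorphic) choice of $\log\rho$ and $\theta$, and symmetry in $I\mapsto -I$ is automatic because $\rho^2=g^s$ and $g_v$ are already slice--preserving. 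The class membership $\log_* g\in\mathcal{SR}_\omega(\Omega)$ follows since the vectorial part of $h$ is a scalar multiple of $g_v$.

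For case $(c)$, with $Z(\omega)$ discrete, the same Ansatz $h=\log\sqrt{g^s}+\theta\,\dfrac{g_v}{\sqrt{g_v^s}}$ is used, but now $\sqrt{g^s}$ and the argument $\theta$ must be chosen holomorphically on $\Omega_I$ \emph{minus the discrete set} $Z(\omega)\cap\Omega_I$ and then shown to extend across those punctures. Condition \eqref{realimage} guarantees one can pick the branch of $\log\sqrt{g^s}$ that is real on $\Omega\cap\R$, so that piece is single valued and real-symmetric; condition \eqref{counterex}, that $g_0/\sqrt{g^s}$ avoids $(-\infty,-1]$, is exactly what is needed for the principal branch of $\arccos$ (equivalently $\theta$) to be a well-defined single-valued holomorphic function, since $(-\infty,-1]$ is the branch locus one must avoid once the other ray has been handled by \eqref{realimage}. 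The main obstacle — and the step I would dwell on — is the removability/extension across the discrete zero set $Z(\omega)$: near a point of $Z(\omega)$ both $g_v$ and $\sqrt{g_v^s}$ vanish, so $g_v/\sqrt{g_v^s}$ and $\theta$ individually look singular, and one must show that the combination $\theta\, g_v/\sqrt{g_v^s}$ (and hence $h$) is bounded and therefore extends holomorphically, using $\sin\theta=\sqrt{g_v^s}/\sqrt{g^s}\to 0$ together with the hypotheses to control $\theta$; here I expect to need a local normal-form analysis of how $g_v$ vanishes (its order, via the factorization theory for slice--regular functions) to see that $\theta\to 0$ at the same rate, making $h$ extend. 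Once extension is established, the representation formula upgrades $h$ from the slice to a slice--regular function on $\Omega$, its vectorial part is proportional to $g_v$ so it lies in $\mathcal{SR}_\omega(\Omega)$, and $\exp_* h=g$ follows from \eqref{espression for exp} by construction.
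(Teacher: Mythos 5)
Your overall strategy --- split $g=g_0+g_v$, use the polar representation \eqref{espression for exp}, reduce to branch choices on the simply connected components of $\Omega_I$, and handle the discrete set $Z(\omega)$ by a removability argument --- is the same as the paper's, and your treatments of $(a)$ and of the subcase $Z(\omega)=\varnothing$ of $(b)$ match the paper's proof in substance. There is, however, a genuine gap in the other subcase of $(b)$, namely $\bS Z(\omega)=\Omega$. There a minimal representative $w$ of $\omega$ satisfies $w\not\equiv 0$ but $w^s\equiv 0$, so $\Omega$ is a product domain, $g_v^s\equiv 0$ and $g_0$ is nonvanishing; hence $\sqrt{g_v^s}\equiv 0$ and your Ansatz $h=\log\rho+\theta\,g_v/\sqrt{g_v^s}$ is meaningless: the normalized ``imaginary unit'' $g_v/\sqrt{g_v^s}$ does not exist, and saying the case is ``identically problematic in a way that trivializes the symmetrization'' is not an argument. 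The missing observation is nilpotency: $g_v^{*2}=-g_v^s=0$, so $\exp_*(g_v/g_0)=1+g_v/g_0$ and one checks directly that $\log_* g=\log_{*}g_0+g_v/g_0$ works, the slice--preserving logarithm of $g_0$ existing with no condition because $\Omega\cap\R=\varnothing$. This subcase needs its own short computation, not the covering-space machinery.

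In $(c)$ your diagnosis of the difficulty is correct, but the step you ``expect to need'' is precisely the content of the paper's formula and must actually be carried out. Two points. First, at $z_0\in Z(\omega)$ one has $g^s(z_0)=g_0(z_0)^2$, so $g_0/\sqrt{g^s}$ takes the value $\pm1$ there; condition \eqref{counterex} rules out $-1$, but the value $+1\in[1,+\infty)$ is unavoidably attained, so a single-valued holomorphic branch of $\arccos$ does not exist on the image (and condition \eqref{realimage} does not ``handle'' the ray $[1,+\infty)$ --- it concerns the sign of $\sqrt{g^s}$ on the real axis). One must instead invert $\mu$, i.e.\ work with $\theta^2=f_v^s=\mu_0^{-1}(g_0/\sqrt{g^s})$, which is well defined on all of $\H\setminus(-\infty,-1]$ since $\mu_0$ is bijective there and a local diffeomorphism at $0$; your combination is even in $\theta$, so this is repairable, but as written the branch bookkeeping is off. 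Second, the removability of $\theta\,g_v/\sqrt{g_v^s}$ at $z_0$ is exactly the identity $\theta/\sin\theta=1/\nu(\theta^2)$ with $\nu(0)=1$: rewriting the vectorial part as $g_v/\bigl(\nu(f_v^s)\sqrt{g^s}\bigr)$ makes it manifestly slice--regular across $Z(\omega)$ and sidesteps any need for $\sqrt{g_v^s}$ to be single-valued near $z_0$. So your plan for $(c)$ points in the right direction, but it remains a plan; the paper's proof consists of writing this formula down and verifying it.
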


Now, if the functions $\mu, \nu \in \mathcal{SR}_{\R}(\H)$ are defined by the identities
\[\mu(z^2) = \cos z \qquad \mathrm{and}\qquad   \nu(z^2) = \frac{\sin z}{z}, \] then the last formula in (\ref{espression for exp})
can be rewritten as \[\exp_*f = \exp(f_0)  (\mu(f_v^s) + \nu(f_v^s)f_v).\]
Moreover, for any $I \in \bS$ the mapping
\[\mu_I : \C_I \setminus \{k^2 \pi^2\ : \ k \in \bN \cup \{0\} \} \ra \C_I \setminus\{1,-1\}\] turns out to be  a covering map (see subsection \ref{properties_mu}). In this setting, we can obtain the  second main result of this paper which appears in Section~\ref{6} (Subsection~\ref{6.2}): Theorem~\ref{isolated}. It produces a formula for the $*$-logarithms of a non vanishing slice--regular function $g$, defined on a basic domain with no real points and whose vectorial class $[g_v]$ has only one (non real) zero.

In  the last section, we also show that for the following function
$$
  g(z) = -1 + z^2i + \sqrt{2} z j + k,
$$
which is nonvanishing on the ball $B^4(0,1.1)$, there is no slice--regular logarithm  globally defined in the entire $B^4(0,1.1).$
Indeed, this function $g$  meets the hypotheses of Theorem \ref{general} $(c),$ but does not fulfil the stated sufficient conditions (\ref{realimage}) and (\ref{counterex}).

While  preparing  the  final  draft  of  this  paper, we became aware that results  similar to ours, but   suggested  by  different motivations  and  involving  different  techniques,  were  obtained  by  Altavilla and de Fabritiis and are now posted on {\tt arXiv} (\cite{AdF1}).\\

\section{Preliminary results}
 Given any quaternion $z \not\in
 \mathbb{R},$ there exist (and are uniquely determined) an imaginary
 unit $I\in \bS$, and two real numbers, $x,y$  $y>0$, such that
 $z=x+Iy$. With this notation, the conjugate of $z$ will be $\bar z :=
 x-Iy$ and $|z|^2=z\bar z=\bar z z=x^2+y^2$.
 Each $I\in \bS$ generates (as a real algebra) a copy
 of the complex plane denoted by $\mathbb{C}_I = \R + I\R $. We call such a complex
 plane a {\em slice}.  The upper half-plane in $\mathbb{C}_I$, namely the set $\C_I^+:=\{x+yI\in \mathbb{C}_I\ :\ y>0\}$ will be called a {\em leaf}.

\begin{definition}\label{ASD} A domain $\Omega$ of $\mathbb H$ will be called \emph{axially symmetric}\footnote{Some authors use the term ``circular.''} if
\[
\Omega=\bigcup_{x+Iy\in \Omega} x+\mathbb Sy
\]
i.e., if for all $x,y \in \R$ and all $I\in \mathbb S$, we have that $x+Iy\in \Omega$ implies that the entire $2$-sphere $x+\mathbb Sy$ is contained in $\Omega$.
\end{definition}

The proof of the following facts is straightforward:

\begin{proposition}
Let $\Omega\subseteq \H$ be an axially symmetric domain. For all $I\in \mathbb S$, we have that \[
\Omega=\bigcup_{x+Iy\in \Omega_I} x+\mathbb Sy
\]
Moreover, for all $I\in \mathbb S$, the set $\Omega_I\subseteq R+I\R$ is invariant under conjugation, i.e., $\Omega_I=\overline{\Omega_I}$.
\end{proposition}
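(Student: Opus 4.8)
The plan is to derive both assertions directly from the defining property of axial symmetry, namely that $x+Jy\in\Omega$ forces the entire sphere $x+\bS y\subseteq\Omega$. The only structural fact I will need beyond the definition is that every $q\in\H$ lies on exactly one such sphere: if $q\notin\R$ then $q=x+Jy$ with $J\in\bS$ and $y>0$ uniquely determined, while if $q\in\R$ the ``sphere'' $x+\bS\cdot 0$ degenerates to the single point $q$. I will also use repeatedly that $-I\in\bS$ for every $I\in\bS$, since $(-I)^2=I^2=-1$.

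For the first identity I fix $I\in\bS$ and prove the two inclusions. The inclusion $\supseteq$ is immediate: if $x+Iy\in\Omega_I\subseteq\Omega$, then axial symmetry gives $x+\bS y\subseteq\Omega$, so the whole right-hand union sits inside $\Omega$. For $\subseteq$, I take $q\in\Omega$ and write $q=x+Jy$ with $J\in\bS$ and $y\geq 0$. By axial symmetry $x+\bS y\subseteq\Omega$; in particular, choosing the imaginary unit $I$, the point $x+Iy$ lies in $\Omega\cap\C_I=\Omega_I$. Since $q=x+Jy$ belongs to the sphere $x+\bS y$ and this very sphere is indexed in the union by the point $x+Iy\in\Omega_I$, I conclude that $q\in\bigcup_{x+Iy\in\Omega_I} x+\bS y$.

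For the conjugation invariance, let $z=x+Iy\in\Omega_I$ with $x,y\in\R$. Then $\bar z=x-Iy=x+(-I)y$, and since $-I\in\bS$ this point lies on the same sphere $x+\bS|y|$ as $z$; hence $\bar z\in\Omega$ by axial symmetry. Moreover $\bar z=x-Iy\in\R+I\R=\C_I$, so in fact $\bar z\in\Omega\cap\C_I=\Omega_I$. This establishes $\overline{\Omega_I}\subseteq\Omega_I$, and applying conjugation once more (an involution) yields the equality $\Omega_I=\overline{\Omega_I}$.

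There is really no hard step here: both statements are bookkeeping consequences of the definition, which is why the excerpt calls them straightforward. The only point requiring a moment's care is the treatment of real points $y=0$, where the sphere collapses to a single real point and the index $x+Iy=x$ becomes independent of $I$; once this degenerate case is read into the formulas as above, the two inclusions cover the real and nonreal cases uniformly.
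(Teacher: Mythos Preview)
Your proof is correct and is precisely the direct verification from the definition that the paper has in mind; the paper itself omits the argument, remarking only that the proof is straightforward, and your write-up simply spells out those straightforward steps.
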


A class of natural domains of definition for slice--regular functions is the following one.

\begin{definition}\label{slice_domain} A domain $\Omega$ of $\mathbb H$ is called a \emph{slice domain} if, for all $I\in \mathbb S$, the subset
$\Omega_I$ is a domain in $\R+I\R$ and if $\Omega\cap\R\neq \varnothing$.  If, moreover, $\Omega$ is axially symmetric, then it is called a {\em symmetric} slice domain.
\end{definition}

On the other hand, slice functions (see [GP]) are naturally defined  on axially symmetric domains which are not necessarily slice domains.

\begin{definition} An axially symmetric domain $\Omega$ of $\mathbb H \setminus \R$ is called a \emph{product domain}.
\end{definition}

If $\Omega\subseteq \H$ is an axially symmetric domain, then for (one and hence for) all $I\in \mathbb S$, the set $\Omega_I$ is an open subset of $\C_I$ such that: either it is a connected set that intersects $\R$, or  it has two symmetric connected components separated by the real axis, swapped by the
conjugation. In the former case, $\Omega$ is an axially symmetric slice domain; in the latter case $\Omega$ is a product domain.

\begin{proposition}
Let $\Omega\subseteq \H$ be an axially symmetric domain. Then $\Omega$ is either a symmetric slice domain or it is a product domain.
\end{proposition}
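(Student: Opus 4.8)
The plan is to reduce the entire statement to the connectedness of a single planar set. Since $\Omega$ is a domain it is open and connected, and the two alternatives are mutually exclusive (a symmetric slice domain meets $\R$ by definition, a product domain is contained in $\H\setminus\R$), so it suffices to prove the following: when $\Omega\cap\R\neq\varnothing$, each slice $\Omega_I$ is connected, hence a domain, making $\Omega$ a symmetric slice domain; and when $\Omega\cap\R=\varnothing$, $\Omega$ is a product domain, which then needs no further argument beyond the definition.

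First I would introduce the projection $p\colon\H\to\R\times[0,+\infty)$ defined by $p(q)=(\Re q,\,|\Im q|)$, where $\Re q=(q+\bar q)/2$ and $\Im q=q-\Re q$. Both coordinate functions are continuous, so $p$ is continuous. Set $D:=p(\Omega)$. The essential point is the equivalence $q\in\Omega\iff p(q)\in D$: two quaternions with the same image under $p$ lie on a common sphere $x+\bS y$ (or coincide as a single real point), and Definition \ref{ASD} forces such a sphere to lie either entirely inside or entirely outside $\Omega$. Since $p$ is continuous and surjective onto $D$ and $\Omega$ is connected, it follows that $D$ is connected, and $D$ is nonempty because $\Omega$ is.

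Next I would identify each slice with a planar model. Fix $I\in\bS$. The real-linear map $\psi_I\colon\R^2\to\C_I$, $(x,y)\mapsto x+Iy$, is a homeomorphism, and by the equivalence above it carries $\tilde D:=D\cup\rho(D)$, with $\rho(x,y)=(x,-y)$, exactly onto $\Omega_I$; indeed $x+Iy\in\Omega_I$ holds iff $p(x+Iy)=(x,|y|)\in D$ iff $(x,y)\in\tilde D$. Hence $\Omega_I$ is homeomorphic to $\tilde D$, and the conjugation $x+Iy\mapsto x-Iy$ corresponds to $\rho$, so connectedness and the component structure of $\Omega_I$ may be read off from $\tilde D$.

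Finally I would analyse $\tilde D$. The set $\rho(D)$ is connected, being a homeomorphic copy of $D$. If $\Omega\cap\R\neq\varnothing$ then $D$ meets the axis $\{y=0\}$, so $D$ and $\rho(D)$ share a point, and the union of two connected sets with a common point is connected; thus $\Omega_I$ is connected for every $I$ and $\Omega$ is a symmetric slice domain. If instead $\Omega\cap\R=\varnothing$ then $D\subseteq\{y>0\}$ and $\rho(D)\subseteq\{y<0\}$, which are separated by the real axis, so $\tilde D$, and with it $\Omega_I$, splits into exactly the two connected components $\psi_I(D)$ and $\psi_I(\rho(D))$, interchanged by conjugation and separated by $\R$, and $\Omega$ is a product domain. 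The argument is essentially formal; the only steps deserving care are the equivalence $q\in\Omega\iff p(q)\in D$, which is precisely where axial symmetry (and not merely invariance of $\Omega_I$ under conjugation) is used, and the verification that $\psi_I$ maps $\tilde D$ onto $\Omega_I$.
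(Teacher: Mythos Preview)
Your proof is correct. In the paper this proposition is not given a formal proof; the authors simply assert, in the paragraph immediately preceding the statement, that for an axially symmetric domain $\Omega$ the slice $\Omega_I$ is either connected and meets $\R$, or has two conjugate components separated by the real axis. Your argument via the projection $p\colon\H\to\R\times[0,+\infty)$ and the reflected set $\tilde D=D\cup\rho(D)$ is the natural way to make that assertion rigorous: the key step, as you correctly flag, is the equivalence $q\in\Omega\iff p(q)\in D$, which is exactly where axial symmetry enters and which reduces the connectedness question for $\Omega_I$ to an elementary statement about $D$ and its mirror image in the plane. The paper leaves all of this implicit, so your write-up supplies what the paper omits rather than taking a different route.
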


The following class of domains will play a key role in this paper.

\begin{definition}\label{gjf} A domain $\Omega$ of $\mathbb H$ is called a {\em basic} {\em  domain} if it is axially symmetric and if, for (one and hence for) all $I\in \mathbb S$, the single connected component or both the connected components of  $\Omega_I$ are simply connected.
A basic domain is also a \emph{basic neighborhood} of any of its points.
\end{definition}

The following examples show that being a 
  simply connected domain and being a basic domain are distinct notions
  in general.

  \begin{example}{\em
    For any given pair of positive real numbers $0<r<R$, the axially symmetric domain $A_{r,R}=\{q\in\mathbb{H}\ :\ r<|q|<R\}$ is simply connected but the domain of the slice $\C_I$ obtained as
    $A_{r,R}\cap \mathbb{C}_I$ is not simply connected for any $I\in\mathbb{S}$. Hence$A_{r,R}$ is not a basic domain.}
\end{example}
  \begin{example}{\em
    The axially symmetric domain $\mathbb{H}\setminus\mathbb{R}$
    is not simply connected, but the intersection of $\mathbb{H}\setminus\mathbb{R}$ with any slice $\mathbb{C}_I$ has two connected components, and each one
    is simply connected. Hence $\mathbb{H}\setminus\mathbb{R}$ is a basic domain}.
    \end{example}

We will now recall a unified definition of the class of slice regular functions on axially symmetric domains, valid both for slice domains and for product domains (see, e.g., \cite{GP1}). If $\i^2 = -1,$  consider the complexification $\H_\C =\H+\i \H,$ of the skew field $\H$ and set $x+\i y \mapsto x-\i y$ to be the natural involution of $\H_\C$. For any $J\in \bS$, let the map

$$\phi_J\colon \H_\C \to \H$$ be defined by $$\phi_J(x+\i y)=x+Jy$$
Notice that the map $\phi_J$, when restricted to $\R_\C\cong \R+\i \R\cong \C$, is an isomorphism between $\C$ and $\R+J\R=\C_J$.

If $\Omega \subseteq \H$ is an axially symmetric domain, and if $i$ denotes the imaginary unit of $\C$,  then the intersection $\Omega_{i}=\Omega\cap (\R+i\R)=\Omega\cap \C$ defines a domain of the complex plane that is invariant under complex conjugation, i.e., is such that $\Omega_{i}=\overline{\Omega_{i}}$. With respect to the established notations, the subset $\Omega_{\i}=\{x+\i y \in \H+\i \H : x+iy \in \Omega_i\}$ is called the \emph{image of $\Omega_i$ in $\H_\C$}, and is invariant under involution, i.e., $\Omega_{\i}=\overline{\Omega_{\i}}$.
We are now in a position to recall the following definitions.

\begin{definition}\label{stem} Let $\Omega \subseteq \H$ be an axially symmetric open set, let $\Omega_{i}=\Omega\cap (\R+i\R)$ and let $\Omega_{\i}$ be the image of $\Omega_i$ in $\H_\C$.

A function $F \colon \Omega_{\i} \to\H_\C$ is called a \emph{stem function} if $F(\bar z) = \overline{F(z)}$ for all $z \in \Omega_{\i}$. For each stem function $F \colon \Omega_{\i} \to \H_\C$, there exists a unique $f \colon \Omega \to \H$ such that the diagram
$$\begin{tikzcd}
\Omega_{\i}\arrow{d}{\phi_J}\arrow{r}{F}&\H_\C\arrow{d}{\phi_J}\\
\Omega\arrow{r}{f}&\H
\end{tikzcd}$$
commutes for all $J \in \bS$. The function $f$ is called the \emph{slice function} induced by $F$ and denoted by $\mathcal{J}(F)$.


Let $f=\mathcal J(F ), g=\mathcal J (G)$ be the slice functions induced by the stem functions $F, G$ respectively. The \emph{$*$-product} of $f$ and $g$ is defined as the slice function $f*g:=\mathcal J(FG)$.

\end{definition}
We will use a definition of slice regularity (and $*$-product) that involve stem functions, and that is valid for any axially symmetric domain of $\H$. When restricted to symmetric slice domains, it coincides with the Definition 1.2 of slice regularity initially presented in \cite{GS}.

\begin{definition}\label{regularitygp} Let $\Omega \subseteq \H$ be an axially symmetric open set.

A slice function $f \colon \Omega \to \H$, induced by a stem function $F \colon \Omega_{\i} \to \H_\C$, is called \emph{slice--regular} if $F$ is holomorphic.
The set of all slice--regular functions on $\Omega$ is denoted by $\mathcal{SR}(\Omega).$

A  slice
function $f\colon \Omega \to \H$ is said to be {\em slice--preserving} if and only if
$\forall I \in \bS, \forall z\in\Omega_I:=\Omega\cap \mathbb{C}_I$ we have that
$f(z)\in \C_I$. The set of all slice--regular
functions, which are slice--preserving in $\Omega,$ will be denoted as
$\cS\mathcal{R}_{\mathbb{R}}(\Omega)$.
\end{definition}

The next proposition recalls two well known technical results that will be extensively used in the sequel (see, e.g., \cite{GP1}).


\begin{proposition}\label{regularitygp} Let $\Omega \subseteq \H$ be an axially symmetric open set, and let $f, g\in \mathcal{SR}(\Omega)$ be two slice--regular functions. Then
\begin{enumerate}
\item[(a)] the $*$-product $f*g$ is a slice--regular function on $\Omega$;
\item[(b)] if $f$ is slice--preserving, then $f*g=fg=g*f$\ , i.e, the $*$-product coincides with the pointwise product.
\end{enumerate}
\end{proposition}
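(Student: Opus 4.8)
The plan is to carry out both statements entirely at the level of stem functions, exploiting the correspondence $F \mapsto \mathcal{J}(F)$ together with the single structural fact that the adjoined unit $\i$ is \emph{central} in $\H_\C$; equivalently, $\H_\C \cong \H \otimes_\R \C$ is a complex algebra whose centre is the copy $\R + \i\R$ of $\C$. Everything below reduces to this observation.

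For part $(a)$, write $f = \mathcal{J}(F)$ and $g = \mathcal{J}(G)$ with $F,G$ holomorphic stem functions, so that $f*g = \mathcal{J}(FG)$ by definition; it suffices to show that $FG$ is again a holomorphic stem function. First I would check the stem condition: writing $F(z) = a + \i b$ and $G(z) = c + \i d$ with $a,b,c,d \in \H$ and using $\i^2 = -1$ together with the centrality of $\i$, one gets $F(z)G(z) = (ac - bd) + \i(ad + bc)$, and the same expansion applied to $\overline{F(z)}\,\overline{G(z)} = (a-\i b)(c-\i d)$ gives $(ac-bd) - \i(ad+bc)$, so that $\overline{F(z)}\,\overline{G(z)} = \overline{F(z)G(z)}$; hence $(FG)(\bar z) = F(\bar z)G(\bar z) = \overline{F(z)}\,\overline{G(z)} = \overline{(FG)(z)}$. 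Next I would observe that $FG$ is holomorphic: the multiplication map $\H_\C \times \H_\C \to \H_\C$ is $\C$-bilinear precisely because $\i$ is central (for the right-hand slot one needs $u\i = \i u$), and the composite of the holomorphic map $z \mapsto (F(z),G(z))$ with this $\C$-bilinear, hence holomorphic, multiplication is again holomorphic. Thus $FG$ is a holomorphic stem function and $f*g = \mathcal{J}(FG) \in \mathcal{SR}(\Omega)$.

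For part $(b)$ the preliminary step is to translate slice-preservation into a property of $F$. Fixing $x + \i y \in \Omega_{\i}$ and writing $F(x+\i y) = a + \i b$ with $a,b \in \H$, the commuting square gives $f(x+Iy) = \phi_I(F(x+\i y)) = a + Ib$ for every $I \in \bS$. Requiring $a + Ib \in \C_I$ for all $I$ forces $a,b \in \R$: applying the condition to $I$ and to $-I$ and adding yields $2a \in \C_I$, hence $a \in \bigcap_{I\in\bS} \C_I = \R$; then $Ib = (a+Ib) - a \in \C_I$, and multiplying by $-I \in \C_I$ (using that $\C_I$ is a field) gives $b \in \C_I$ for all $I$, so $b \in \R$ as well. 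Therefore $f$ is slice-preserving iff $F$ takes values in the central slice $\R + \i\R$. With $F$ central-valued, commutativity $FG = GF$ holds pointwise, whence $f*g = \mathcal{J}(FG) = \mathcal{J}(GF) = g*f$. For coincidence with the pointwise product I would verify the elementary identity $\phi_I(uv) = \phi_I(u)\,\phi_I(v)$, valid whenever $u \in \R+\i\R$: with $u = \alpha + \i\beta$ ($\alpha,\beta \in \R$) and $v = c + \i d$, both sides expand, using $I^2 = -1$ and the commutation of real scalars with $I$, to $\alpha c - \beta d + \alpha I d + \beta I c$. Applying this with $u = F(x+\i y)$ and $v = G(x+\i y)$ gives $(f*g)(x+Iy) = \phi_I(FG) = \phi_I(F)\,\phi_I(G) = f(x+Iy)\,g(x+Iy)$, i.e.\ $f*g = fg$, and combined with the commutativity already shown this yields $f*g = fg = g*f$.

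Once the stem-function viewpoint is fixed, the argument is essentially bookkeeping driven by the centrality of $\i$ in $\H_\C$. The only point I expect to carry genuine content is the characterization of slice-preserving functions as those with $(\R+\i\R)$-valued stem function, i.e.\ the linear-algebra reduction $a,b \in \R$ resting on $\bigcap_{I\in\bS} \C_I = \R$; the holomorphicity and stem-function checks in $(a)$, and the multiplicativity of $\phi_I$ on central elements in $(b)$, are routine once that reduction and the centrality of $\i$ are in hand.
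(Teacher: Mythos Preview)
Your argument is correct. Note, however, that the paper does not supply its own proof of this proposition: it is stated as a recollection of ``two well known technical results'' with a reference to \cite{GP1}, so there is no in-paper proof to compare against. Your stem-function approach---checking that the involution on $\H_\C$ is multiplicative so that $FG$ is again a stem function, that multiplication is $\C$-bilinear by centrality of $\i$ so $FG$ is holomorphic, and that slice-preservation corresponds to $F$ taking values in the central copy $\R+\i\R$---is exactly the standard route used in the literature (in particular in Ghiloni--Perotti's framework), so what you have written is essentially the proof the citation points to.
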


Let us now define the \emph{imaginary unit function} 
 \[\cI \colon \H \setminus \R \to \mathbb{S}\, \]
by setting $\cI(q) = I$ if $q \in \mathbb{C}_I.$  The function $\cI$ is slice--regular and slice--preserving, but it is not an open mapping and  it is not defined on any slice domain.

Consider now an axially symmetric open set  
$\Omega$  and $f \in \mathcal{SR}(\Omega).$ We have already defined the splitting $f = f_0 + f_v,$ where the scalar part
$f_0$ of $f$ is a slice--preserving function.
\begin{definition}
  The function $f\in \mathcal{SR}(\Omega)$ is a {\em vectorial function} if $f = f_v.$
  The set of  vectorial functions on $\Omega$ will be denoted by
  $\cS\mathcal{R}^v(\Omega)$. We have $\mathcal{SR}(\Omega) = \mathcal{SR}_{\R}(\Omega)\oplus\mathcal{SR}^v(\Omega).$
  \end{definition}
Given a standard basis of $\H,$ the vectorial part can be decomposed further
(\cite{CGS}, Proposition 3.12, compare \cite{AdF}, Proposition 2.1):
\begin{proposition}\label{frm}
  Let $\{1, i, j, k\}$ be the standard basis of $\H$ and assume $\Omega$ is an  axially symmetric domain of $\H$.
  Then the map
\[ (\cS\mathcal{R}_{\mathbb{R}} (\Omega))^4\ni  (f_0,\ f_1,\ f_2,\ f_3 ) \mapsto f_0 + f_1 i + f_2 j + f_3 k \in \cS\mathcal{R}(\Omega)\]
is bijective.
\end{proposition}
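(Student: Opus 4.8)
The plan is to pass from slice--regular functions to their stem functions, where the statement reduces to an elementary linear-algebra fact over the complexification $\H_\C$. By Definition~\ref{stem} the map $\mathcal{J}$ sending a holomorphic stem function $F\colon \Omega_{\i} \to \H_\C$ (satisfying $F(\bar z) = \overline{F(z)}$) to its induced function is a bijection onto $\mathcal{SR}(\Omega)$; moreover $f \in \mathcal{SR}_{\R}(\Omega)$ precisely when the corresponding stem function takes values in $\C = \R + \i\R \subseteq \H_\C$ (the stem-function characterization of slice--preserving functions, cf.\ \cite{GP1}). First I would record this dictionary, so that the proposition becomes an assertion about stem functions. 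Note that this framework treats slice domains and product domains uniformly, so the argument never needs $\Omega$ to meet $\R$.

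Next I would exploit the algebraic structure of $\H_\C$. Writing $e_0 = 1,\, e_1 = i,\, e_2 = j,\, e_3 = k$, the element $\i$ is central in $\H_\C$, so $\{e_0,e_1,e_2,e_3\}$ is a basis of $\H_\C$ as a free module over $\C = \R + \i\R$: every $w \in \H_\C$ has a unique expression $w = \sum_{a=0}^3 c_a e_a$ with $c_a \in \C$. I would then check compatibility with the two pieces of structure defining a stem function. The involution $\overline{u + \i v} = u - \i v$ (for $u,v\in\H$) acts on coordinates by $\overline{\sum_a c_a e_a} = \sum_a \bar c_a e_a$, so the reality condition holds for $F = \sum_a F_a e_a$ if and only if it holds for each coordinate $F_a$. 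Since $\i$ is central and the $e_a$ are constants, the identification $\H_\C \cong \C^4$ afforded by this basis is $\C$-linear for the complex structure given by $\i$-multiplication, whence $F$ is holomorphic if and only if each $F_a\colon \Omega_{\i} \to \C$ is holomorphic. Thus every holomorphic stem function $F$ decomposes uniquely as $F = F_0 + F_1 i + F_2 j + F_3 k$ with each $F_a$ a $\C$-valued holomorphic stem function, i.e.\ with $f_a := \mathcal{J}(F_a) \in \mathcal{SR}_{\R}(\Omega)$.

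The one point requiring genuine care is to verify that $\mathcal{J}$ carries this coordinatewise decomposition of $F$ to the pointwise identity $\mathcal{J}(F) = f_0 + f_1 i + f_2 j + f_3 k$; this is not automatic, since $\phi_J$ sends $\i$ to $J$ and is only $\R$-linear, not $\C$-linear. I would compute directly: writing $F_a(z) = \alpha_a(z) + \i\beta_a(z)$ with $\alpha_a,\beta_a$ real, one has $\sum_a F_a(z) e_a = \sum_a \alpha_a e_a + \i \sum_a \beta_a e_a$, and applying $\phi_J$ gives $\sum_a \alpha_a e_a + J\sum_a \beta_a e_a = \sum_a (\alpha_a + J\beta_a) e_a = \sum_a \phi_J(F_a(z)) e_a$. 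Since $f_a(\phi_J(z)) = \phi_J(F_a(z))$ by the defining diagram, and every point of $\Omega$ is of the form $\phi_J(z)$, this yields $\mathcal{J}(F) = f_0 + f_1 i + f_2 j + f_3 k$ pointwise; here the centrality of $\i$ and the constancy of the $e_a$ are exactly what let the multipliers $i,j,k$ pass through $\phi_J$ unchanged. This compatibility check is the main obstacle, everything else being bookkeeping with the stem-function dictionary.

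With these ingredients the conclusion is immediate. Surjectivity follows because an arbitrary $f = \mathcal{J}(F) \in \mathcal{SR}(\Omega)$ is the image of the quadruple $(f_0,f_1,f_2,f_3)$ obtained by decomposing its stem function. For injectivity, if $f_0 + f_1 i + f_2 j + f_3 k = g_0 + g_1 i + g_2 j + g_3 k$ with all components in $\mathcal{SR}_{\R}(\Omega)$, then by the matching identity and the injectivity of $\mathcal{J}$ the associated $\C$-valued stem functions satisfy $\sum_a F_a e_a = \sum_a G_a e_a$, and uniqueness of the $\C$-basis decomposition of $\H_\C$ forces $F_a = G_a$, hence $f_a = g_a$, for every $a$. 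This proves that the map is bijective.
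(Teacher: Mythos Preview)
Your argument is correct. The paper does not actually supply its own proof of this proposition: it simply quotes the result from \cite{CGS}, Proposition~3.12 (compare \cite{AdF}, Proposition~2.1), so there is no in-paper argument to compare against. What you have written is precisely the standard stem-function proof that underlies those references: reduce to the target space $\H_\C$, use that $\{1,i,j,k\}$ is a $\C$-basis of $\H_\C$ (with $\C=\R+\i\R$) because $\i$ is central, observe that both the involution $u+\i v\mapsto u-\i v$ and holomorphicity are diagonal in this basis, and then check that $\phi_J$ intertwines the decomposition with pointwise right-multiplication by $i,j,k$. Your explicit computation $\phi_J\big(\sum_a F_a(z)e_a\big)=\sum_a\phi_J(F_a(z))\,e_a$ is the only step where noncommutativity could cause trouble, and you handle it correctly (it uses that $\beta_a\in\R$ and that the $e_a$ sit on the right). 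Nothing is missing.
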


\noindent In the sequel, all bases of $\mathbb{H}\cong \mathbb{R}^4$ will be orthonormal (and positively oriented)
with respect to the standard scalar product of $\mathbb{R}^4$.
Proposition \ref{frm} implies that, given any $f, g \in \mathcal{SR}(\Omega)$, there exist and are unique
$f_0,\ f_1,\ f_2,\ f_3, g_0,\ g_1,\ g_2,\ g_3 \in \cS\mathcal{R}_{\mathbb{R}}(\Omega)$ such that
\begin{eqnarray*}
f = f_0 + f_1 i + f_2 j + f_3 k=f_0+f_v\\
g=g_0+g_1i+g_2j+g_3k=g_0+g_v
\end{eqnarray*}
With the above given notation, if we call \emph{regular conjugate} of $f$ the function
\[f^c=f_0-f_v\] then we have \[f_0= \frac{f+f^c}{2}.\]
Furthermore, using Definition~\ref{stem} and Proposition~\ref{regularitygp}, we obtain the following expression for the $*$-product of $f$ and $g$:
\begin{equation}\label{*product}
f*g := f_0g_0-f_1g_1-f_2g_2-f_3g_3+f_0g_v+g_0f_v+ \dfrac{f_v*g_v-g_v*f_v}{2}
\end{equation}
%
We now set
\[f^s:=f_0^2+f_1^2+f_2^2+f_3^2=f*f^c=f^c*f\]
and call $f^s$ the \emph{symmetrization} of $f$.\\

\section{Basic properties of the exponential}

  If $\exp(q)$ is
the (quaternionic) exponential mapping
defined in \eqref{classicexp}, then
for every $k \in \Z,$ we define its restriction to the cylinder $\{q\ :\ \Im(q) \in  \bS (k\pi,(k+1)\pi)   \}$
to be
  \[\exp_k\colon \{q\ :\ \Im(q) \in  \bS (k\pi,(k+1)\pi)  \} \ra \H \setminus \R.\]
For any $k\in\Z$ the function  $\exp_k$  is a bijective slice--regular slice--preserving function  with a slice--regular and slice--preserving inverse, namely
$$
      \log_{k}(q) = \log|q| + \cI(q)\,\arg_{\cI(q),k}(q)\,,
      $$
{where  $\arg_{\cI(q),k} \in (k\pi,(k+1)\pi)$ denotes the argument of $q$ in the complex plane $\C_{\cI(q)}.$  }
      The mapping $\log_0$ is  called  the {\em principal branch} of the logarithm and can be extended to \[ \log_0\colon \H \setminus (-\infty, 0] \ra \{q\ :\ \Im(q) \in  \bS [0,\pi)   \} \] as the inverse of the extension of
 \[\exp_0 : \{q\ :\ \Im(q) \in  \bS [0,\pi)   \} \ra  \H \setminus (-\infty, 0].\]

Let turn our attention to the problem of computing the logarithm of a function $g$, defined on a domain
$\Omega $ of  
$\mathbb{H}$. For any continuous function $g\colon\Omega \to \mathbb{H}\setminus (-\infty,0]$, one can define
\[f:=\log_0 \circ \,g,\]
so that the diagram
$$\xymatrix
   { & \mathbb{R}\times \mathbb{S}[0,\pi) \ar[dr]^{\exp} & \\
     \Omega \ar[rr]^{g} \ar[ur]^{f} & & \mathbb{H}\setminus (-\infty,0]
   }
$$
commutes.
 In these hypotheses, for any $z\in \Omega,$ we have the equality
$\exp(f(z))=g(z)$ by definition, but even if $g$ is slice--regular, no
regularity on the function $f$ can be argued.   If, in addition $\Omega,$ is axially symmetric and $g\in \mathcal{SR}_\R(\Omega)$
is a slice--regular and slice--preserving function, then
$f$ is a well-defined slice--regular and slice--preserving
function too.
Indeed, (see Proposition~\ref{regularitygp})
 the equality
$\exp_*(f)=\exp(f)=g$ holds on $\Omega$  for $f = \log_0 \circ\, g$ and we say that the function $f$ is a logarithmic function of $g$ (in $\Omega$).


We have thus shown

\begin{proposition}
Let  $\Omega\subseteq\H$ be a symmetric slice domain.  If $g\in  \mathcal{SR}_{\R}(\Omega)$ is such that
\[
g(\Omega) \subset \mathbb{H}\setminus (-\infty,0]
\]
then the function
\[
f= \log_0 \circ\, g
\]
is the (slice--regular and slice--preserving) principal logarithm of $g$.
\end{proposition}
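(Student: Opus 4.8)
The proof plan is to exploit the fact that, for a slice--preserving slice--regular function $g$, the $*$-exponential degenerates into the ordinary exponential (Proposition~\ref{regularitygp}(b)), so that the problem reduces entirely to a statement about holomorphic functions on each slice and on the associated stem function. First I would observe that, since $g$ is slice--preserving, for each $I\in\bS$ the restriction $g_I\colon\Omega_I\to\C_I$ maps $\Omega_I$ into $\C_I\setminus(-\infty,0]$; because $\Omega_I$ is a domain in $\C_I$ (a slice domain has connected $\Omega_I$) and $(-\infty,0]$ is precisely the branch cut for which the principal complex logarithm $\log_0$ is holomorphic on the complement, the composition $f_I:=\log_0\circ g_I$ is a well-defined holomorphic map $\Omega_I\to\C_I$ with values in the strip $\R+I[0,\pi)\,\cdot\,$--- more precisely in $\{x+Iy:y\in[0,\pi)\}$. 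This is the single genuine analytic input, and it is essentially the classical fact that $z\mapsto\log_0 z$ is holomorphic on $\C\setminus(-\infty,0]$.

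Next I would verify that the functions $f_I$, as $I$ ranges over $\bS$, assemble into a single slice function on $\Omega$, i.e.\ that $f=\log_0\circ g$ is well-defined on $\Omega$ and slice--preserving. For this I would use the stem-function formalism of Definition~\ref{stem}: writing $g=\cJ(G)$ with $G\colon\Omega_{\i}\to\H_\C$ holomorphic, the hypothesis that $g$ is slice--preserving forces $G$ to take values in $\R_\C=\R+\i\R\cong\C$, and the image condition $g(\Omega)\subset\H\setminus(-\infty,0]$ translates into $G(\Omega_{\i})\subset\C\setminus(-\infty,0]$. Then $\Log:=\log_0\circ G$ (the classical holomorphic complex logarithm) is a holomorphic map $\Omega_{\i}\to\C\subset\H_\C$, and one checks the stem condition $\Log(\bar z)=\overline{\Log(z)}$ from the corresponding symmetry $G(\bar z)=\overline{G(z)}$ together with the symmetry $\log_0(\bar w)=\overline{\log_0 w}$ of the principal branch (valid on $\C\setminus(-\infty,0]$, which is conjugation-invariant). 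Hence $F:=\Log$ is a holomorphic stem function, $f:=\cJ(F)$ is slice--regular by Definition~\ref{regularitygp}, it is slice--preserving because $F$ has values in $\R_\C$, and by construction $\phi_J\circ F=f\circ\phi_J$ recovers exactly $f_I=\log_0\circ g_I$ on each slice.

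Finally I would conclude that $\exp_*(f)=g$. Since $f$ is slice--preserving, Proposition~\ref{regularitygp}(b) gives $\exp_*(f)=\exp(f)$, and on each slice $\exp(f_I)=\exp(\log_0\circ g_I)=g_I$ by the defining property of the principal branch recalled in Section~3 (namely that $\log_0$ is the inverse of $\exp_0$ extended to $\H\setminus(-\infty,0]$). As this holds for every $I\in\bS$ and both sides are slice functions, $\exp_*(f)=g$ on $\Omega$; moreover $f$ lands in the strip $\{q:\Im(q)\in\bS[0,\pi)\}$, so it is the principal logarithm in the sense of the branch $\log_0$ introduced above, which justifies calling $f=\log_0\circ g$ \emph{the} principal logarithm of $g$.

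The only place requiring any care --- and thus the ``main obstacle,'' though a mild one --- is the passage from the slicewise statement to the global one: one must make sure that $\log_0\circ g$ really is a slice function (equivalently, that it comes from a stem function), rather than merely a compatible family of holomorphic functions on the slices. This is handled cleanly by pushing the composition down to the level of the stem function $G$ as above; the conjugation-invariance of both the cut $(-\infty,0]$ and the branch $\log_0$ is exactly what makes the stem condition $F(\bar z)=\overline{F(z)}$ go through, and the slice--preserving hypothesis on $g$ is what keeps everything inside the commutative subfield $\C_J$ of $\H$ so that no noncommutativity issues arise.
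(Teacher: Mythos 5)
Your proposal is correct and follows essentially the same route as the paper: define $f=\log_0\circ g$, check slicewise that it is a well-defined slice--regular, slice--preserving function (the paper asserts this tersely, you justify it properly at the level of the stem function, which is exactly how the paper's Definition~\ref{regularitygp} is set up), and then use that $\exp_*=\exp$ on slice--preserving functions to conclude $\exp_*(f)=g$. The only nitpick is that on a fixed slice $\C_I$ the image of $\log_0\circ g_I$ is the strip $\{x+Iy:\ |y|<\pi\}$ rather than $\{x+Iy:\ y\in[0,\pi)\}$ (the latter description is correct only under the convention that $y\ge 0$ with $I$ varying over $\bS$); this does not affect the argument.
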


Let us point out that if $f \in \mathcal{SR}_{\R}(\Omega)$,  with $\Omega\subseteq \H$  any symmetric slice
domain, then
 $(\exp_* f)(x_0)= (\exp f)(x_0)> 0$ for any $x_0 \in \Omega \cap \R.$
Hence the condition
\begin{equation}\label{cond1}
g(\Omega \cap \R) \subset (0,+\infty)
\end{equation}
is a necessary condition for a slice--preserving function $g\in \mathcal{SR}_{\R}(\Omega)$ to have a slice--preserving logarithm (see also \cite{AdF}).

\section{$*$-Exponential of a quaternionic function} \label{rem1}

In this section we shortly recall some results from \cite{AdF}, which are necessary to explain our definition of $*$-logarithm.

 The $*$-exponential map of a slice--regular
function $f\in\cS\mathcal{R}(\Omega)$, with $\Omega$ axially symmetric domain, is defined for any $z\in \Omega$ as in \eqref{*exp} by
\[\exp_*(f(z))=\sum_{k\geq 0} \dfrac{f(z)^{*k}}{k!}\]
in such a way that $\exp_*(f)\in\cS\mathcal{R}(\Omega).$
The equality $\exp_*(f+g) = \exp_*f * \exp_* g$ does not hold in general as stated in Theorem~\ref{*commute} (see also Theorem 4.14 in \cite{AdF}), which we premise a crucial definition to.
\begin{definition}\label{lin_dep}
Let $f_v \in \cS\mathcal{R}^v(V)$ and $g_v \in \cS\mathcal{R}^v(V),$ where $V\subset \H$
is an axially symmetric domain in $\mathbb{H}$. 
We say that $f_v$ and $ g_v$
are linearly dependent over $\cS\mathcal{R}_{\mathbb{R}}(V)$ 
if and only
if there exist $a,b\in\cS\mathcal{R}_{\mathbb{R}}(V)$, with $a$ or $b$ not identically zero in $V$,
such that $af_v+bg_v=0$ in $V$.
If  $V\subset \H$ is an axially symmetric open set in $\mathbb{H}$, then 
 $f_v$ and $ g_v$
are linearly dependent over $\cS\mathcal{R}_{\mathbb{R}}(V)$ 
if and only if they are linearly dependent over $\cS\mathcal{R}_{\mathbb{R}}(V_{\lambda} )$ for each connected component $V_{\lambda}$ of $V$.
\end{definition}

\begin{remark}\label{onlyrealzeros}
  Real isolated zeroes and isolated spherical zeroes can be factored out of a slice regular function (see, e.g., \cite{GSS, GP1}).
  As a consequence for any vectorial function
$f_v\in \cS\mathcal{R}^v(\Omega)$ on an axially symmetric open set $\Omega$ and for every  axially symmetric open set $V \Subset 
\Omega$, there exists a non identically zero, slice--regular and slice--preserving function $a\in \cS\mathcal{R}_{\mathbb{R}}(\Omega)$ such that
$$
f_v=a\widetilde f_v
$$
with $\widetilde f_v\in \cS\mathcal{R}^v(\Omega)$ having neither real nor spherical zeroes on $V$. Of course $f_v$ and $\widetilde f_v$ are linearly dependent over $\cS\mathcal{R}_{\mathbb{R}}(\Omega)$.
\end{remark}

\begin{theo}\label{*commute}
  Assume that the
{axially symmetric}
  domain $\Omega$  intersects the real axis (i.e., it is a symmetric slice domain).  Take $f, g \in  \mathcal{SR}(\Omega)$. If
\begin{equation}\label{formula1}
\exp_*(f+g)=\exp_*(f)*\exp_*(g)
\end{equation}
then either
\begin{itemize}
\item[(i)] $f_v$ and $g_v$ are linearly dependent over $\mathcal{SR}_{\R}(\Omega)$ or
\item[(ii)] there exist $n, m, p \in  \mathbb{Z}\setminus \{0\}$ such that $f^s=n^2\pi^2, $ $g^s=m^2\pi^2,$

$2
(f_1g_1+f_2g_2+f_3g_3)=(p^2 -n^2-m^2)\pi$
and $n + m \cong$ p mod 2.
\end{itemize}
Vice versa, if either (i) or (ii) are satisfied, then (\ref{formula1}) holds.
\end{theo}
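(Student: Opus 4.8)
The plan is to reduce everything to a statement about stem functions and ordinary holomorphic maps into the complexification $\H_\C$, so that the noncommutative algebra is replaced by a computation in a (still noncommutative) complex algebra $\H_\C$ whose idempotent/zero-divisor structure is explicit. Write $f = \cJ(F)$, $g = \cJ(G)$ with $F, G \colon \Omega_\i \to \H_\C$ holomorphic stem functions; since $\exp_*$ is defined termwise from the $*$-product, we have $\exp_*(f) = \cJ(\exp F)$ where $\exp$ is the ordinary exponential in the associative algebra $\H_\C$, and similarly $\exp_*(f+g) = \cJ(\exp(F+G))$. Thus \eqref{formula1} holds on $\Omega$ if and only if $\exp(F(z)+G(z)) = \exp(F(z))\exp(G(z))$ for all $z$ in the domain $\Omega_\i$ (using that $\phi_J$ is injective for each $J$ and that the stem function determined a slice function uniquely). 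By the identity principle for holomorphic functions on the connected (since $\Omega$ is a symmetric slice domain) set $\Omega_\i$, this in turn is equivalent to the pointwise statement for all $z$, or equivalently to a condition comparing the values $F(z)$, $G(z)$ as elements of $\H_\C$.

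Next I would analyze, purely algebraically, when two elements $A, B \in \H_\C$ satisfy $\exp(A+B) = \exp(A)\exp(B)$. Decompose $A = a_0 + a_v$, $B = b_0 + b_v$ into scalar and vectorial parts over $\C$; the scalar parts are central, so they contribute harmlessly and one reduces to the vectorial parts. For a vectorial $a_v \in \H_\C^v$ one has $a_v^2 = -(a_1^2+a_2^2+a_3^2) =: -\a^2 \in \C$, so $\exp(a_v) = \cos\a + \frac{\sin\a}{\a} a_v$ (interpreted as a power series when $\a = 0$); the same for $b_v$ with $-\b^2 = b_v^2$. Multiplying out $\exp(a_v)\exp(b_v)$ and comparing with $\exp(a_v+b_v)$, where $(a_v+b_v)^2 = -\a^2-\b^2-\{a_v,b_v\}$ with $\{a_v,b_v\} = a_v b_v + b_v a_v = -2(a_1b_1+a_2b_2+a_3b_3) =: -2\langle a_v,b_v\rangle$, reduces the identity to a system of scalar and vectorial equations in $\C$. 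The vectorial equation will force either that the cross-term vanishes (i.e. $a_v$ and $b_v$ are proportional over $\C$, since $a_v b_v - b_v a_v$ is the ``cross product'' direction) or that the trigonometric coefficients $\sin\a$, $\sin\b$ behave degenerately, i.e. $\a, \b \in \pi\Z$; running through the cases and matching the remaining scalar equation $\cos\sqrt{\a^2+\b^2+2\langle a_v,b_v\rangle} = \cos\a\cos\b - \frac{\sin\a\sin\b}{\a\b}\langle a_v,b_v\rangle$ (plus the sign constraint on the square root) yields precisely the numerology $\a^2 = n^2\pi^2$, $\b^2 = m^2\pi^2$, $2\langle a_v,b_v\rangle = (p^2-n^2-m^2)\pi$, $n+m\equiv p \bmod 2$.

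Finally I would transfer this back: linear dependence of $a_v(z), b_v(z)$ over $\C$ pointwise, holding on all of the connected $\Omega_\i$, is equivalent (by a standard Wronskian / holomorphic-rank argument, or by citing the stem-function version of Definition \ref{lin_dep}) to linear dependence of $f_v, g_v$ over $\cS\mathcal{R}_\R(\Omega)$, which is case (i); and the constancy of the scalar holomorphic functions $f^s = a_0^2+\a^2 \circ$ stem, etc., together with the numerical relations, translates verbatim into case (ii) since $f^s$, $g^s$, $f_1g_1+f_2g_2+f_3g_3$ are slice--preserving hence determined by their (constant) values. The converse direction is the easy check: if (i) holds, choose a common ``imaginary unit'' direction and the computation collapses to the classical scalar identity $\exp(u+v) = \exp u\exp v$ in a commutative subalgebra; if (ii) holds, one verifies the trigonometric identity directly from $\sin(n\pi) = 0$, $\cos(n\pi) = (-1)^n$.

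The main obstacle I anticipate is the bookkeeping in the case split for the vectorial equation: when $\a = 0$ or $\b = 0$ the expressions $\frac{\sin\a}{\a}$ must be handled as entire functions, and when $\sin\a = \sin\b = 0$ but the directions of $a_v, b_v$ are \emph{not} proportional one must check carefully that the scalar equation still has the stated solutions and no others — in particular that $p$ is forced to exist with the right parity and that $p \ne 0$. A secondary subtlety is making sure the passage from ``identity on $\Omega$'' to ``identity of stem functions'' is airtight on a product domain as well (where $\Omega_\i$ has two components), but the theorem as stated restricts to the symmetric slice domain case, so connectedness of $\Omega_\i$ is available and the identity principle applies cleanly.
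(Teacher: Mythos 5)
First, a point of calibration: the paper does not actually prove Theorem \ref{*commute}; it is imported verbatim from Altavilla--de Fabritiis (\cite{AdF}, Theorem 4.14), so there is no internal proof to measure your argument against. Your overall route --- pass to stem functions, note that $\exp_*(f)=\cJ(\exp F)$ with $\exp$ the ordinary exponential series in the associative algebra $\H_\C$, and then classify pointwise when $\exp(A+B)=\exp(A)\exp(B)$ for $A,B\in\H_\C$ via the scalar/vectorial splitting and the identity $\exp(a_v)=\cos\alpha+\frac{\sin\alpha}{\alpha}a_v$ --- is the natural one and is consistent in spirit with how the result is established in the literature (via the polar form $\exp_*(f)=\exp(f_0)(\mu(f_v^s)+\nu(f_v^s)f_v)$ and a comparison of scalar, parallel and cross components). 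So the approach is not wrong.

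As a proof, however, what you have written is a programme rather than an argument, and the places you yourself flag as ``anticipated obstacles'' are exactly where the mathematical content lives; one further gap is not flagged at all. First, your dichotomy for the vectorial equation (``either the cross term vanishes or $\sin\alpha\sin\beta/(\alpha\beta)=0$'') silently assumes that $a_v$, $b_v$ and $a_vb_v-b_va_v$ are linearly independent over $\C$ whenever $a_v,b_v$ are. Over $\H_\C$ this can fail: the Lagrange identity gives $\langle a_v\times b_v,\,a_v\times b_v\rangle=\alpha^2\beta^2-\langle a_v,b_v\rangle^2$, which can vanish over $\C$ for a nonzero cross product, in which case $a_v\times b_v$ may lie in the span of $a_v,b_v$ and the component-by-component comparison collapses. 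These isotropic configurations are precisely the zero-divisor phenomena ($f_v^s\equiv 0$ with $f_v\not\equiv 0$) that the paper treats with care elsewhere; to exclude them you must actually use the hypothesis that $\Omega$ meets the real axis, where stem values are genuinely quaternionic and isotropy forces vanishing, and then propagate by the identity principle. Second, the step from ``$a_v(z)$ and $b_v(z)$ are $\C$-dependent for each $z$'' to ``$f_v,g_v$ are linearly dependent over $\cS\mathcal{R}_{\R}(\Omega)$'' is genuinely nontrivial, since the pointwise dependence relation can a priori vary with $z$; the Wronskian/rank argument you allude to must be run on the $3\times 2$ matrix of holomorphic components and the resulting meromorphic kernel cleared to coefficients in $\cS\mathcal{R}_{\R}(\Omega)$. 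Third, the exhaustive matching of the scalar equation in the degenerate case --- including why $n,m,p$ are all forced to be nonzero, why the parity constraint is both necessary and sufficient, and why no sporadic solutions occur --- is asserted rather than carried out. None of this invalidates the strategy, but until these three points are executed the proposal does not yet constitute a proof of the theorem.
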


Hence equality \eqref{formula1} holds if 
there exist
$a ,b \in\cS\mathcal{R}_{\mathbb{R}}(\Omega)$ such that $af_v+bg_v=0$  with $a\not \equiv 0$ or $b\not\equiv 0$.
In particular this implies
\[\exp_*(0)=\exp_* (f ) * \exp_* (-f ) \equiv 1\]
so that, for every $f\in\cS\mathcal{R}(\Omega)$, the slice--regular function $\exp_* (f )$ is a nonvanishing function in $\Omega$.
If $f=f_0+f_v$,  then
\begin{equation}
\label{expexp}\exp_*(f)=\exp(f_0)\exp_*(f_v).\end{equation}
Moreover
 \[
 \exp_*(f^c)=(\exp_*(f))^c
 \]
whence
\begin{eqnarray*}
(\exp_*(f))^s &=& \exp_*(f)*(\exp_*(f))^c\\&=&\exp_*(f)*\exp_*(f^c)=\exp_*(f+f^c)\\ &=&\exp(2 f_0)
\end{eqnarray*}
\noindent and, from \eqref{expexp},
\begin{equation} \label{structure}
  \exp_*(f)=\exp(f_0)\left(\sum_{m\in\mathbb{N}} \dfrac{(-1)^m(f_v^s)^m}{(2m)!}+\sum_{m\in\mathbb{N}} \dfrac{(-1)^m(f_v^s)^m}{(2m+1)!}f_v
  \right).
\end{equation}
Following \cite[Remark 4.8]{AdF} we will use the notations
\begin{eqnarray}\label{munu}
  \mu(z):= \sum_{m\in\mathbb{N}} \dfrac{(-1)^m z^m}{(2m)!}, \quad \nu(z):=\sum_{m\in\mathbb{N}} \dfrac{(-1)^mz^m}{(2m+1)!}.
\end{eqnarray}
 Both functions $\mu$ and $\nu$ are entire  slice--regular and slice--preserving functions in $\H$, in symbols $\mu,\nu\in\cS\mathcal{R}_{\mathbb{R}}(\H)$.
Furthermore,
\begin{equation}\label{munu2}
\mu(z^2) = \cos z \qquad \mathrm{and}\qquad   \nu(z^2) = \frac{\sin z}{z}
\end{equation}
where, in general,
\begin{equation}\label{cos}
  \cos_* (f)=\sum_{m\in\mathbb{N}} \dfrac{(-1)^m(f)^{*(2m)}}{(2m)!}\quad\mathrm{and}\quad \sin_* (f)=\sum_{m\in\mathbb{N}} \dfrac{(-1)^mf^{*(2m+1)}}{(2m+1)!}
\end{equation}
for $f\in \cS\mathcal{R}(\Omega)$.
Notice that also $\cos_*$ and $\sin_*$ are entire slice--regular and slice--preserving functions in $\H.$
More in detail (see again \cite[Corollary 4.7]{AdF}),
given a basic domain $\Omega$ and a slice--regular function $f\colon\Omega\to \H$, such that
 $f_v^s$ is not identically zero
 and $f_v$ has only real or spherical zeroes, 
then, in $\Omega$,
\begin{equation}\label{polar}
  \exp_*(f)=\exp(f_0)\left(\cos(\sqrt{f_v^s})+\sin(\sqrt{f_v^s})\dfrac{f_v}{\sqrt{f_v^s}}\right),
\end{equation}
where $\sqrt{f_v^s}$ is defined in the obvious way, being $f_v^s$ a slice--preserving function. 
 Indeed, we will refer to \eqref{polar}
as the  \emph{polar representation} for $\exp_*(f)$. The reader can find more details about the definition of square roots in
Proposition 3.1 and Corollary 3.2 in \cite{AdF} (see also \cite{paper-2}).

\subsection{Properties of the function $\mu$ }\label{properties_mu}
Let us first list some properties of the function $\mu$, defined by \eqref{munu}, which are
essential to define the logarithm of a slice--regular function. Since we have the identity
$\mu(q^2)= \cos(q),$ for any $q\in\mathbb{H}$, we first define the branches $\mu_k$ of $\mu$ using  the branches  of the inverse of the function $\cos,$ i.e. the inverses of
\[\cos_k : \{q\ :\ \Re(q) \in (k\pi,(k+1)\pi)\} \ra \H ((-\infty,-1] \cup [1,+\infty)),\]
denoted by $\arccos_k.$
To this end consider first the domains
\begin{itemize}
 \item $D_0:= \{q\ :\ \Re(q) \in [0,\pi) \},$ $D_{-1}:= \{q\ :\ \Re(q) \in (-\pi,0]\},$
 \item $D_{k}:=\{q\ :\ \Re(q) \in (k\pi,(k+1)\pi)\}$ for $k \in \Z \setminus\{0,-1\}.$
\end{itemize}
Notice that domains $D_k, k \ne 0,-1,$ lie entirely either in the right  half-space $\{q\ :\ \Re(q) > 0\}$ or in the left half-space
$\{q\ :\ \Re(q) <0\},$ so the squaring map  $p_2,$  $p_2(q) = q^2,$ is injective on each $D_k$ and hence bijective onto $p_2(D_k)$ with an inverse $\sqrt{\phantom{a}}.$

For all $k \in \Z$ define the domains $M_k,\tilde{M}_k$ to be
$$
  M_{k}:= p_2(D_k), \, \tilde{M}_k:= \mu(M_k) = \cos (D_k),
$$
and observe that
\begin{itemize}
\item $0 \in M_0= M_{-1},$
\item $\tilde{M}_0 = \tilde{M}_{-1} = \H \setminus (-\infty,-1],$
\item $\tilde{M}_k = \H \setminus ((-\infty,-1] \cup [1,+\infty)), k \ne 0,-1.$
\end{itemize}
By definition, for each $k \in \Z,$ the diagram
$$\xymatrix
   { & \ar[dl]_{p_2} D_k \ar[dr]^{\cos} & \\
     M_k  \ar[rr]^{\mu_k} & & \tilde{M}_k
   }
$$
commutes. The choice of domains $D_k,\, k \ne 0,-1,$ is such that both $\cos$ and $p_2$ are bijective, hence so is $\mu_k.$
To see that also $\mu_0$ and $\mu_{-1}$ are bijective it remains to show that they are  bijective when restricted to the imaginary axis. In this case, since both $\cos$ and $p_2$ are even, we have for $q \in  \Im(\H)$
\[\cos(q) = \cos(-q) \mbox{ and }  p_2(q) = p_2(-q).\]
%
Moreover, for each $I \in \bS,$ $k = 0,-1,$ the restrictions
\[\cos: I[0,+\infty)   \ra [1,+\infty) \] and
\[p_2: I[0,+\infty)  \ra (-\infty,0] \]
are injective, which implies that the induced maps $\mu_0, \mu_{-1}$ are bijective.

The points $k \pi, k \in \Z$, are branching points for the complex cosine, which implies that the points $k^2\pi^2$ are  branching points for $\mu,$ except the point $0,$ which is contained in $M_0$ and where $\mu_0'(0) = -1/2 \ne 0.$\\

We can summarize these considerations in the following
\begin{proposition} For each $k$ the function $\mu_k:=\mu|_{M_k}: M_k \ra \tilde{M}_k$ is bijective with  the inverse $\mu^{-1}_k.$
  In particular, $\mu_0(0)=1$ and
  the function $\mu_0$ maps a neighbourhood of $0$ bijectively to a neighbourhood of $1.$
The mapping
$$
  \mu \colon \H \setminus \{k^2\pi^2, k \in \bN \cup\{0\}\} \ra \H \setminus \{\pm 1\}
$$
is a {\em slice--covering map}, i.e.
$$
  \mu_I \colon\C_I \setminus \{k^2\pi^2, k \in \bN \cup \{0\}\} \ra \C_I \setminus \{\pm 1\}
$$
is a covering map for every $I \in \bS.$  Furthermore, any map $\mu_I$ extends to a local diffeomorphism across the point $0.$
\end{proposition}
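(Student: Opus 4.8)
The plan is to prove the Proposition in two stages: first the statement about the individual branches $\mu_k$, then the covering-map assertion. For the branches, the diagram
\[
\xymatrix{ & \ar[dl]_{p_2} D_k \ar[dr]^{\cos} & \\ M_k \ar[rr]^{\mu_k} & & \tilde M_k }
\]
already exhibits $\mu_k = \cos \circ\, p_2^{-1}$ on $M_k$, so it suffices to check that $p_2\colon D_k \to M_k$ and $\cos\colon D_k \to \tilde M_k$ are each bijective. For $k \ne 0,-1$ the domain $D_k = \{\Re(q)\in(k\pi,(k+1)\pi)\}$ lies strictly in one open half-space $\{\Re(q)>0\}$ or $\{\Re(q)<0\}$; since $q \mapsto q^2$ identifies only $q$ with $-q$, it is injective there, and $\cos_k$ is bijective onto $\H\setminus((-\infty,-1]\cup[1,+\infty))$ by the definition of $\arccos_k$. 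For $k=0,-1$, bijectivity of $p_2$ and $\cos$ fails only on the imaginary axis, where both maps are even; so I would restrict to $I[0,+\infty)$ for each $I\in\bS$ and use that $\cos\colon I[0,+\infty)\to[1,+\infty)$ and $p_2\colon I[0,+\infty)\to(-\infty,0]$ are injective — exactly the computations recorded just above the statement. Hence each induced $\mu_k$ is a bijection onto $\tilde M_k$ with a well-defined inverse $\mu_k^{-1}$. The normalization $\mu_0(0)=\cos(0)=1$ and the local-diffeomorphism claim at $0$ follow from $\mu_0'(0)=-\tfrac12\ne 0$ (differentiate $\mu(q^2)=\cos q$: $2q\,\mu'(q^2)=-\sin q$, and take the limit $q\to 0$), together with the inverse function theorem applied slicewise.

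For the covering-map statement, the key point is that $\mu$ is slice--preserving and slice--regular on all of $\H$, so it is determined by its restriction $\mu_i$ to a single slice $\C_i$, and the assertion ``$\mu_I$ is a covering map for every $I\in\bS$'' reduces, by the symmetry of $\mu$ under the action of $\bS$, to the single complex statement that
\[
\mu_i \colon \C \setminus \{k^2\pi^2 : k\in\bN\cup\{0\}\} \;\longrightarrow\; \C\setminus\{\pm 1\}
\]
is a covering map. Here I would argue as follows. The complex cosine $\cos\colon\C\to\C$ is a holomorphic map whose critical points are exactly $\{k\pi : k\in\Z\}$ with critical values $\{\pm1\}$, and it is a standard fact that $\cos\colon \C\setminus\{k\pi : k\in\Z\}\to\C\setminus\{\pm1\}$ is a covering. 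The squaring map $p_2\colon\C\to\C$ has its unique critical point at $0$, with critical value $0$, and $p_2\colon\C\setminus\{0\}\to\C\setminus\{0\}$ is the standard double cover. Now $\mu$ is the unique holomorphic function with $\mu\circ p_2 = \cos$; away from $0$, $p_2$ is a local biholomorphism, so on $\C\setminus\{k^2\pi^2 : k\ge 1\}\setminus\{0\}$ the map $\mu_i$ is locally $\cos\circ (p_2^{-1})$ and is therefore a local biholomorphism with the same covering behaviour as $\cos$ over $\C\setminus\{\pm1\}$; at the single point $0\in M_0$, $\mu_0$ is itself a local biholomorphism since $\mu_0'(0)\ne 0$. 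One then checks the covering property directly: given $w\in\C\setminus\{\pm1\}$, pick disjoint evenly-covered neighbourhoods for $\cos$ over $w$; their images under the branches $\mu_k^{-1}$ (reassembled via $p_2$) furnish an evenly-covered neighbourhood of $w$ for $\mu_i$, using that the preimage $\mu_i^{-1}(w) = p_2(\cos^{-1}(w))$ and that $p_2$ is injective on each $D_k$. Finally, the slicewise local diffeomorphism across $0$ is immediate from $\mu_0'(0)=-\tfrac12\ne 0$, and the global ``across the point $0$'' phrasing follows because $0\in\R$ lies on every slice simultaneously.

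I would organize the write-up as: (1) recall $\mu\circ p_2=\cos$ and compute $\mu_0'(0)=-1/2$; (2) prove bijectivity of each $\mu_k$ by the half-space/imaginary-axis case split above, giving $\mu_0(0)=1$ and the neighbourhood statement from the inverse function theorem; (3) reduce the covering claim to the single slice $\C_i$ by slice-preservation; (4) factor $\mu_i$ through $\cos$ and $p_2$, invoke the classical facts that $\cos\colon\C\setminus\pi\Z\to\C\setminus\{\pm1\}$ and $p_2\colon\C^*\to\C^*$ are coverings, handle the point $0$ separately, and assemble evenly-covered neighbourhoods; (5) conclude the local-diffeomorphism statement at $0$.

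I expect the main obstacle to be the gluing at the point $0 = 0^2\pi^2$: it is the one ``branch point'' of $\cos$ (at $q=0$, the critical value $\cos 0 = 1$) that becomes a regular point of $\mu$ because the chain-rule factor $2q$ in $2q\,\mu'(q^2) = -\sin q$ vanishes to the same order as $\sin q$. One must be careful that $0$ is removed from the source ($\{k^2\pi^2\}$ with $k=0$) yet $\mu_0$ extends smoothly across it — so the covering statement as written excludes $0$ from the domain even though $\mu$ is a local diffeomorphism there, and the ``furthermore'' clause is precisely the observation reconciling the two. Making this interplay precise, and verifying that near $w=1$ the covering sheets coming from $\mu_0$ and $\mu_{-1}$ (which share $M_0=M_{-1}$) are correctly counted, is the delicate bookkeeping; everything else is a routine transfer of the classical facts about $\cos$ and $p_2$ through the slice--preserving structure.
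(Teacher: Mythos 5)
Your proposal is correct and follows essentially the same route as the paper: the paper's ``proof'' is precisely the discussion preceding the statement, namely the factorization $\cos = \mu \circ p_2$ on each $D_k$, the half-space/imaginary-axis case split for injectivity (with evenness of $\cos$ and $p_2$ handling $k=0,-1$), and the computation $\mu_0'(0)=-1/2$ at the non-branching point $0$. The only difference is that you spell out the evenly-covered-neighbourhood bookkeeping for the slice-covering claim, which the paper leaves implicit as a summary of those considerations; this is a harmless (and arguably welcome) elaboration rather than a different approach.
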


It turns out that for $k \ne 0, -1$ we have
\[
   \arccos_{k}:= \sqrt{\phantom{a}} \circ \mu_k^{-1}: \tilde{M}_k \ra D_k,
\]
and, for $k = 0,-1$ we have
\[
   \arccos_{k}:= \sqrt{\phantom{a}} \circ \mu_k^{-1}\colon \tilde{M}_k \setminus [1,+\infty) \ra D_k \setminus \{q\ :\ \Re(q) = 0\}.
\]



\section{Globally defined vectorial class}\label{vectclass}

Formula (\ref{formula1})  shows how crucial it is
for two slice--regular functions to have linearly dependent vectorial parts.
This motivates the following

\begin{definition}\label{vec_class}
  Let $f_v \in \cS\mathcal{R}^v(U)$ and $g_v \in \cS\mathcal{R}^v(U'),$ where $U,U'\subset \H$
are axially symmetric domains  in $\mathbb{H}$ such that $U\cap U'\neq\varnothing$.
Take $p\in U\cap U'$; we say that $f_v$ and $g_v$ are \emph{equivalent at $p$}, in symbols
 $f_v \sim_p g_v$, if there exist an  axially symmetric neighborhood of $p$, $V_p \subset U\cap U'$, such that  $f_v$ and $g_v$
are linearly dependent over $\cS\mathcal{R}_{\mathbb{R}}(V_p)$ in $V_p$. We will denote by $[f_v]_p$ the $\sim_p$ equivalence class whose representative is $f_v$.

\end{definition}

It is easy to verify that the relation $\sim_p $ is an equivalence relation at each point $p$; The definition above immediately implies that if $f_v \sim_p g_v$ then $f_v \sim_q g_v$ for every $q \in \bS p=:\bS_p.$ Moreover:

\begin{remark}\label{minimalrep}
For each equivalence class $[f_v]_p$\,  we can choose a local representative $\widetilde f_v$ having neither real nor spherical zeroes (see Remark~\ref{onlyrealzeros}).
\end{remark}

\begin{definition}
By $\mathcal{V}_p$ we denote the set of all  $\sim_p $ equivalence classes of vectorial functions at $p,$ namely
\[ \mathcal{V}_p:= \{[f_v]_p: f_v \in \mathcal{SR}^v(U), \mbox{$U$ axially symmetric neighborhood of $p$} \}.\]
\end{definition}

\begin{definition} Let $U $ be an axially symmetric open set and
\[
  \mathcal{V}_U:=\{\mathcal{V}_p, p \in U \}
\]
be  the set of all  equivalence classes of vectorial functions with respect to equivalence relations $\sim_p,$ with $p \in U.$  A {\em vectorial class} $\omega_U$ on  $U$ is defined to be any function $$\omega_U\colon U \ra \mathcal{V}_U$$  such that:
\begin{itemize}
\item for all $p\in U$, it holds $\omega_U(p) \in \mathcal{V}_p$;
\item if $p,q \in U,$  if $\omega_U(p) = [f_v]_p$ with $f_v \in \cS\mathcal{R}^v(V_p)$ for an axially symmetric domain $V_p\subset U$ containing $p$, if $\omega_U(q) = [g_v]_q$ with
$g_v \in \cS\mathcal{R}^v(V_q)$ for an axially symmetric domain $V_q\subset U$ containing $q$, then $[f_v]_{\tilde{p}} = [g_v]_{\tilde{p}}$ for all $\tilde{p} \in V_p\cap V_q.$
\end{itemize}
We denote by ${\mathcal V}(U)$ the set of all vectorial classes  over $U.$

If  $f_v \in \mathcal{SR}^v(U)$ then it obviously defines the vectorial class on $U$
 $$p \mapsto [f_v]_p, p \in U$$  which we denote
by $[f_v]_U$ and call  {\em principal  vectorial class (associated to $f_v$)} on $U.$

\end{definition}

Notice that ${\mathcal V}(U)$ is not a ring over $\mathcal{SR}_{\R}(U)$ (it is not possible
to define the sum of two classes); furthermore, if $[f_v]_U =[g_v]_U$  and $\tilde{f}_v$ and $\tilde{g}_v$ are representatives on $U$ without real or spherical zeroes, then $\tilde{f}_v^s$ identically zero in $U$ implies $\tilde{g}_v^s$ identically zero in $U$.

\begin{definition}
Let $U, U'\subset \H$ be two axially symmetric open sets such that $U\cap U'\neq
\varnothing$. If $f_v \in \cS\mathcal{R}^v(U)$ and $g_v \in
\cS\mathcal{R}^v(U')$ 
are linearly dependent
over $\cS\mathcal{R}_{\mathbb{R}}(U\cap U')$ in $U \cap U'$,
then they define a vectorial class $[f_v \vee g_v]_{U\cup U'} \in \mathcal{V}(U\cup U')$
by

\[\left\{\begin{array}{ccc}
\lbrack f_v \rbrack   &\mathrm{in}& U\setminus U'\\
\lbrack f_v\rbrack  = \lbrack g_v\rbrack  &\mathrm{in}& U\cap U'\\
\lbrack g_v \rbrack &\mathrm{in}& U'\setminus U\\
\end{array}\right.\]
\end{definition}

\begin{definition}
Let  $V \subset U \subset \mathbb{H}$ be  axially symmetric open sets and  let $\omega_U\in\mathcal{V}(U)$ be a vectorial class on $U$. 
The restriction  morphism
\[\res_{V,U} \colon \mathcal{V}(U) \ra \mathcal{V}(V)\]
is defined by
\[ \res_{V,U}(\omega_U):=\omega_U|_{U\cap V}=:\omega_V.\]
\end{definition}

\begin{proposition}\label{PreSH} The collection $\{U,{\mathcal V}(U)\}$  of vectorial classes over all axially symmetric domains  $U \subset \H$
together with the families of restriction  morphisms
$\res_{V,U} \colon \mathcal{V}(U) \ra \mathcal{V}(V), V\subset U,$
 is a presheaf.
\end{proposition}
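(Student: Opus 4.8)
The plan is to verify directly the defining axioms of a presheaf of sets on the collection of axially symmetric domains of $\H$: namely, that to each such domain $U$ there is attached a set $\mathcal{V}(U)$, that to each inclusion $V\subset U$ there is attached a map $\res_{V,U}\colon\mathcal{V}(U)\to\mathcal{V}(V)$, that $\res_{U,U}=\operatorname{id}_{\mathcal{V}(U)}$, and that $\res_{W,V}\circ\res_{V,U}=\res_{W,U}$ whenever $W\subset V\subset U$. The sets $\mathcal{V}(U)$ and the maps $\res_{V,U}$, with $\res_{V,U}(\omega_U)=\omega_U|_{U\cap V}=\omega_U|_V$, have already been introduced, so the only things to check are (i) that $\omega_U|_V$ really is an element of $\mathcal{V}(V)$, i.e.\ a vectorial class on $V$, and (ii) the two functoriality identities; both are routine.

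For (i), fix $\omega_U\in\mathcal{V}(U)$ and an axially symmetric open set $V\subset U$, and set $\omega_V:=\omega_U|_V$. The requirement $\omega_V(p)\in\mathcal{V}_p$ for every $p\in V$ is immediate, since $\mathcal{V}_p$ depends only on the point $p$ and $\omega_V(p)=\omega_U(p)$. The overlap condition for $\omega_V$ is simply an instance of the overlap condition already known to hold for $\omega_U$: if $p,q\in V$ and $f_v\in\cS\mathcal{R}^v(V_p)$, $g_v\in\cS\mathcal{R}^v(V_q)$ represent $\omega_V(p)=\omega_U(p)$ and $\omega_V(q)=\omega_U(q)$ on axially symmetric domains $V_p,V_q$ containing $p$ and $q$ respectively with $V_p,V_q\subset V$, then a fortiori $V_p,V_q\subset U$, so the hypothesis that $\omega_U\in\mathcal{V}(U)$ yields $[f_v]_{\tilde p}=[g_v]_{\tilde p}$ for all $\tilde p\in V_p\cap V_q$. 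One should also check that such representatives on sub-domains of $V$ do exist: any element of $\mathcal{V}_p$ is represented on arbitrarily small axially symmetric neighborhoods of $p$ by restriction, using that a connected component of an axially symmetric open set is itself an axially symmetric domain — indeed any $2$-sphere $x+\bS y$ meeting such a component is connected and contained in the set, hence contained in the component. Therefore $\omega_V\in\mathcal{V}(V)$ and $\res_{V,U}$ is well defined.

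For (ii) there is nothing to do beyond the formal algebra of restricting set-valued functions: $\res_{U,U}(\omega_U)=\omega_U|_U=\omega_U$, so $\res_{U,U}=\operatorname{id}_{\mathcal{V}(U)}$; and for $W\subset V\subset U$ we have $\res_{W,V}(\res_{V,U}(\omega_U))=(\omega_U|_V)|_W=\omega_U|_W=\res_{W,U}(\omega_U)$. Together these exhibit $\{U,\mathcal{V}(U)\}$ with the morphisms $\res_{V,U}$ as a presheaf (of sets only, consistently with the earlier remark that $\mathcal{V}(U)$ carries no natural ring structure). I do not expect any genuine obstacle here: the proposition is essentially bookkeeping, the only mildly delicate point being the observation used in (i) that a class at a point may be represented on domains lying inside any prescribed axially symmetric neighborhood of that point.
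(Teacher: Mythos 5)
Your proof is correct and follows essentially the same route as the paper, which simply notes that $\res_{U,U}=\id_{\mathcal{V}(U)}$ and $\res_{W,V}\circ\res_{V,U}=\res_{W,U}$ are immediate because vectorial classes are functions and restriction of functions is transitive. The extra care you take in step (i) — checking that $\omega_U|_V$ actually satisfies the defining conditions of a vectorial class on $V$ — is a harmless (and arguably welcome) elaboration of a point the paper treats as obvious.
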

\begin{proof}
It is immediate that  $\res_{U,U} = \id_{\mathcal{V}(U)}$.  It is also immediate that
  $\res_{W,V} \circ \res_{V,U} = \res_{W,U}$ holds for
axially symmetric domains $W \subset V \subset U,$  since vectorial classes are functions.
\end{proof}

\begin{proposition}\label{SH}  The presheaf $\{U,\mathcal{V}(U)\}$ is a sheaf and will be denoted by ${\mathcal V}.$
\end{proposition}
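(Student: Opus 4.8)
The plan is to check the two sheaf axioms---separation and gluing---directly from the definition of a vectorial class, exploiting that such a class is literally a \emph{function} $\omega_U\colon U\to\mathcal V_U$ subject to a purely local compatibility condition, and that the restriction morphisms of Proposition~\ref{PreSH} are genuine restrictions of functions. One works throughout with covers by axially symmetric subdomains; since a vectorial class on an axially symmetric open set amounts to a compatible choice of vectorial class on each of its connected components, there is no harm in passing, when convenient, to connected components of the intersections occurring in the axioms.

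Separation is immediate: if $\omega,\omega'\in\mathcal V(U)$ restrict to the same element on every member of a cover $\{U_\alpha\}$ of $U$, then they agree as functions on each $U_\alpha$, hence on $U=\bigcup_\alpha U_\alpha$. For gluing, given a cover $\{U_\alpha\}$ and a compatible family $\omega_\alpha\in\mathcal V(U_\alpha)$, I set $\omega(p):=\omega_\alpha(p)$ for any $\alpha$ with $p\in U_\alpha$; compatibility makes this unambiguous, while $\omega(p)\in\mathcal V_p$ and $\omega|_{U_\alpha}=\omega_\alpha$ are clear. The only substantive point is to show that $\omega$ satisfies the second bullet in the definition of a vectorial class.

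I would first isolate two facts. \emph{(1) Localization lemma}: if $\eta\in\mathcal V(W)$ and $\eta(r)=[h_v]_r$ with $h_v\in\mathcal{SR}^v(V_r)$ for an axially symmetric domain $V_r\subseteq W$ containing $r$, then $\eta(\tilde r)=[h_v]_{\tilde r}$ for all $\tilde r\in V_r$; equivalently $\eta|_{V_r}$ is the principal vectorial class $[h_v]_{V_r}$. This follows by feeding the pair $(r,\tilde r)$ into the second bullet of the definition of a vectorial class, with $h_v$ on $V_r$ and any representative of $\eta(\tilde r)$ on a subdomain of $W$, and evaluating the resulting identity at $\tilde r$ itself. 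Applying (1) inside each $U_\alpha$ shows that the glued $\omega$ is \emph{locally principal}: every $p\in U$ possesses an axially symmetric domain $W_p\subseteq U$ containing it, together with $h_v^p\in\mathcal{SR}^v(W_p)$, such that $\omega|_{W_p}=[h_v^p]_{W_p}$. \emph{(2) Identity principle for linear dependence}: if $h_v,f_v\in\mathcal{SR}^v(N)$ on a connected axially symmetric domain $N$ are linearly dependent over $\mathcal{SR}_{\mathbb R}$ on some nonempty open subset, then they are linearly dependent over $\mathcal{SR}_{\mathbb R}(N)$. Indeed, linear dependence over $\mathcal{SR}_{\mathbb R}(N)$ is equivalent to the identical vanishing on $N$ of the vectorial function whose coordinates in a fixed basis are the $2\times2$ minors $h_af_b-h_bf_a$ (again in $\mathcal{SR}_{\mathbb R}(N)$), and the ordinary identity theorem for slice--regular functions applies to that function; alternatively one argues via ratios after factoring out real and spherical zeroes as in Remark~\ref{onlyrealzeros}.

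Granting (1) and (2), the second bullet for $\omega$ is verified as follows. Given $p,q\in U$, representatives $\omega(p)=[f_v]_p$ with $f_v$ on a domain $V_p\subseteq U$ and $\omega(q)=[g_v]_q$ with $g_v$ on a domain $V_q\subseteq U$, I claim $\omega(\tilde r)=[f_v]_{\tilde r}$ for every $\tilde r\in V_p$: the set $S=\{\tilde r\in V_p:\omega(\tilde r)=[f_v]_{\tilde r}\}$ contains $p$, and around any $\tilde r_0\in V_p$ local principality provides a representative $h_v$ with $\omega=[h_v]$ on a domain $W_{\tilde r_0}$, so that on the connected component $N$ of $W_{\tilde r_0}\cap V_p$ containing $\tilde r_0$ the identity principle (2) forces the equalities $[h_v]_{\tilde r}=[f_v]_{\tilde r}$ to hold either for all $\tilde r\in N$ or for no $\tilde r\in N$, according to whether they hold at $\tilde r_0$; hence $S$ is both open and closed in the connected set $V_p$, so $S=V_p$. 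Symmetrically $\omega(\tilde r)=[g_v]_{\tilde r}$ for all $\tilde r\in V_q$, so on $V_p\cap V_q$ we get $[f_v]_{\tilde p}=\omega(\tilde p)=[g_v]_{\tilde p}$, as required; thus $\omega\in\mathcal V(U)$ and glues the family, and $\mathcal V$ is a sheaf. I expect the main obstacle to be exactly this domain mismatch: the representatives $f_v,g_v$ permitted in the definition of a vectorial class may live on arbitrarily large subdomains of $U$, not subordinate to the cover $\{U_\alpha\}$, so the value of $\omega$ cannot simply be read off from a single $\omega_\alpha$, and one must propagate the equality ``$\omega$ agrees with the germ of $f_v$'' from $p$ across all of $V_p$ using the connectedness of $V_p$ together with the identity principle (2).
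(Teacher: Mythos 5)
Your proof is correct and follows the only available strategy --- direct verification of the separation and gluing axioms --- which is also what the paper does; the difference is one of depth. The paper's treatment of gluing consists of the single assertion that the function defined by $(\omega_U)|_{U_\alpha} := \omega_{\alpha,U_{\alpha}}$ is a vectorial class on $U$, treating the compatibility condition (the second bullet in the definition of a vectorial class) as immediate because vectorial classes are functions. You correctly identify that this condition is not immediate: it quantifies over representatives $f_v$ defined on arbitrary axially symmetric subdomains $V_p\subseteq U$, which need not be subordinate to the cover, so the required identity $[f_v]_{\tilde p}=[g_v]_{\tilde p}$ on $V_p\cap V_q$ cannot simply be read off from a single $\omega_\alpha$. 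Your localization lemma together with the identity principle for linear dependence (via the vanishing of the $2\times 2$ minors and the identity theorem for slice--preserving functions), combined with the openness/closedness argument on the connected set $V_p$, supplies exactly the propagation step that the paper leaves implicit. The one point worth making explicit is that the connected components $N$ of $W_{\tilde r_0}\cap V_p$ are themselves axially symmetric (they are, since each sphere $x+\mathbb{S}y$ is connected and hence lies in a single component), so that linear dependence over $\mathcal{SR}_{\mathbb{R}}(N)$ does yield the relation $\sim_{\tilde r}$ at every point of $N$. In short: same skeleton as the paper, but your version actually proves the gluing axiom rather than asserting it.
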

\begin{proof}
  Let $U$ be an axially symmetric domain and $\varpi_U, \omega_U \in \mathcal{V}_U.$ Let $\{U_{\alpha}\}_{\alpha \in \Lambda}$ be an open covering of $U$ with axially symmetric open sets.
  \begin{itemize}
  \item[(i)] Locality. If we have $\varpi_{U_{\alpha}} = \omega_{U_{\alpha}}$ for all $\alpha \in \Lambda$,
    then by definition $\varpi_U = \omega_U.$ \\
  \item[(ii)]  Gluing. Let the vectorial classes $\omega_{\alpha, U_{\alpha}},$ $ \alpha \in \Lambda$ be such that
     \[\omega_{\alpha,U_{\alpha}}|_{U_{\alpha} \cap U_{\beta}} = \omega_{\beta,U_{\beta} }|_{U_{\beta} \cap U_{\alpha}}, \alpha, \beta \in \Lambda.\]  The function defined by
   \[ (\omega_U)|_{U_{\alpha}} :=  \omega_{\alpha,U_{\alpha}}, \, \alpha \in \Lambda, \]
 is a vectorial class  on $U.$
  \end{itemize}
\end{proof}

\begin{remark}
Vectorial classes  ${\mathcal V}(U)$ are sections of the sheaf $\mathcal{V}_U.$
\end{remark}

Let $\Omega$ be an axially symmetric domain and $f_v$
a vectorial function on $\Omega.$ Then, being slice--regular, its
symmetrization $f_v^s$ is either identically\footnote{This doesn't imply that $f_v$ is identically zero. Consider for example
  $f_v \in\cS\mathcal{R}(\H \setminus \R)$  defined as $f_v(x + Iy) = Ii + j;$
  then $f_{v}^s=I^2 + 1 \equiv 0,$  $f_v \not \equiv 0$ (and $f_v$ has a zero on every sphere).}
$0$  or has isolated real or spherical zeroes.

\begin{proposition} Let $\Omega$ be an axially symmetric domain, $f_v = f_1 i + f_2 j + f_3 k$
a vectorial function on $\Omega$ and assume that $f_v^s$ is not identically zero on $\Omega.$
Let $z_0$ be a real zero of $f_v^s.$ Then it is a real zero of $f_v$ and  there exists $k > 0$ such that
$$
   (q - z_0)^{-k} f_v  =: g_v,
$$
$g_v \in \cS\mathcal{R}_{[f_v]}(\Omega)$ and $g_v(z_0) \ne 0.$ Similarly, if  $f_v$ has a spherical zero  $\bS_{z_0}=\{a + Ib\ :\ I\in\mathbb{S}\}$ of multiplicity $k > 0,$ then
$$
  (q^2 - 2q\Re(z_0) + |z_0|^2)^{-k} f_v   =: g_v,
$$
$g_v \in \cS\mathcal{R}_{[f_v]}(\Omega)$ and $g_v(q) \ne 0$ for all $q \in \bS_{z_0},$ except maybe at one point.
\end{proposition}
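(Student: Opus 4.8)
The plan is to reduce everything to the three components $f_1,f_2,f_3\in\cS\mathcal{R}_{\R}(\Omega)$ of $f_v$, where the assertions become elementary statements about slice--preserving (``real'') functions handled one slice at a time, and then to reassemble them.

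For the real zero $z_0$, I would first observe that, since $z_0\in\R$ belongs to every slice $\C_I$ and each $f_\ell$ is slice--preserving, $f_\ell(z_0)\in\bigcap_{I\in\bS}\C_I=\R$; hence $f_v^s(z_0)=f_1(z_0)^2+f_2(z_0)^2+f_3(z_0)^2=0$ forces $f_1(z_0)=f_2(z_0)=f_3(z_0)=0$, so $z_0$ is a real zero of $f_v$. Since $f_v^s\not\equiv 0$, not every $f_\ell$ is identically zero; let $m_\ell\ge 1$ be the order of the real zero $z_0$ of $f_\ell$ (set $m_\ell=+\infty$ when $f_\ell\equiv 0$) and let $k$ be the least of $m_1,m_2,m_3$, so $1\le k<+\infty$. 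A single real zero can be divided out of a slice--regular function without leaving $\Omega$ (see Remark~\ref{onlyrealzeros} and \cite{GSS,GP1}), so each $g_\ell:=(q-z_0)^{-k}f_\ell$ lies in $\cS\mathcal{R}_{\R}(\Omega)$, with $g_\ell(z_0)\ne 0$ for the index realizing the minimum. Then $g_v:=(q-z_0)^{-k}f_v=g_1 i+g_2 j+g_3 k\in\cS\mathcal{R}^v(\Omega)$ and $g_v(z_0)\ne 0$ because its real components are not all zero. Finally the identity $(q-z_0)^k g_v-f_v\equiv 0$, with $(q-z_0)^k\in\cS\mathcal{R}_{\R}(\Omega)$ not identically zero, shows $f_v$ and $g_v$ are linearly dependent over $\cS\mathcal{R}_{\R}(\Omega)$; as $\Omega$ is connected this gives $[g_v]_p=[f_v]_p$ for every $p\in\Omega$, i.e.\ $g_v\in\cS\mathcal{R}_{[f_v]}(\Omega)$.

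For the spherical zero $\bS_{z_0}$ I would run the same scheme with the slice--preserving characteristic polynomial $\Delta_{z_0}(q):=q^2-2q\Re(z_0)+|z_0|^2=(q-z_0)*(q-\overline{z_0})$ in the role of $q-z_0$. Writing $f_\ell=\Delta_{z_0}^{m_\ell}h_\ell$ with $h_\ell\in\cS\mathcal{R}_{\R}(\Omega)$, $h_\ell(z_0)\ne 0$ (when $f_\ell\not\equiv 0$; $m_\ell=+\infty$ otherwise), and taking $k$ to be the least of $m_1,m_2,m_3$ — which one checks equals the multiplicity of $\bS_{z_0}$ as a zero of $f_v$ — one gets $g_v:=\Delta_{z_0}^{-k}f_v\in\cS\mathcal{R}^v(\Omega)$ with $g_v(z_0)\ne 0$, and $g_v\in\cS\mathcal{R}_{[f_v]}(\Omega)$ from the relation $\Delta_{z_0}^k g_v-f_v\equiv 0$ exactly as before. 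To control the zeros of $g_v$ on all of $\bS_{z_0}$ I would use the normal form of a slice--preserving function, $g_\ell(x+Iy)=u_\ell(x,y)+I\,v_\ell(x,y)$ with $u_\ell,v_\ell$ real and independent of $I$: evaluating on $\bS_{z_0}=a+\bS b$ gives $g_v(a+Ib)=\vec\alpha+I\vec\beta$, where $\vec\alpha:=u_1(a,b)\,i+u_2(a,b)\,j+u_3(a,b)\,k$ and $\vec\beta:=v_1(a,b)\,i+v_2(a,b)\,j+v_3(a,b)\,k$ are fixed purely imaginary quaternions, not both zero because $g_v(z_0)\ne 0$. If $\vec\beta=0$ there is no zero on the sphere; if $\vec\beta\ne 0$ then $\vec\alpha+I\vec\beta=0$ forces $I=-\vec\alpha\vec\beta^{-1}$, a single prescribed quaternion, hence holds for at most one $I\in\bS$.

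I expect the one delicate point to be the multiplicity bookkeeping: checking that the integer $k$ read off from the components really is the (spherical) multiplicity of the zero of $f_v$, and that division by $(q-z_0)^k$, respectively $\Delta_{z_0}^k$, is legitimate on all of $\Omega$ and not merely on relatively compact subsets. All of this belongs to the standard factorization theory of slice--regular functions, so the proof is mostly a matter of assembling known facts; the one genuinely new ingredient is the normal form $g_v(a+Ib)=\vec\alpha+I\vec\beta$, which is what isolates the at most one possible exceptional point on $\bS_{z_0}$.
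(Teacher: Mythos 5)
Your proof follows essentially the same route as the paper's: reduce to the slice--preserving components $f_1,f_2,f_3$, use that they take real values at real points to conclude that a real zero of $f_v^s$ is a common zero of the components, and factor the common power of $q-z_0$ (resp.\ of $q^2-2q\Re(z_0)+|z_0|^2$) out of each component, the linear dependence $(q-z_0)^kg_v-f_v\equiv 0$ then giving $g_v\in\cS\mathcal{R}_{[f_v]}(\Omega)$. One small correction in the spherical case: after dividing by $\Delta_{z_0}^k$ you assert $g_v(z_0)\ne 0$, which need not hold --- the surviving isolated zero of $g_v$ on $\bS_{z_0}$ may well sit at $z_0$ itself (compare the paper's example $w(z)=z^2i+\sqrt{2}zj+k$, whose only zero on $\bS$ is a single point). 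What minimality of $k$ actually gives is that some component $g_{\ell}$ is nonvanishing at $z_0$, hence on all of $\bS_{z_0}$, so that $(\vec\alpha,\vec\beta)\ne(0,0)$; that is all your normal-form argument needs, so the slip is harmless. That final computation $g_v(a+Ib)=\vec\alpha+I\vec\beta$, yielding at most one $I\in\bS$ with $\vec\alpha+I\vec\beta=0$, is a genuine addition: the paper's proof shows that a spherical zero of $f_v$ forces spherical zeros of all the components (via the same normal form) but leaves the ``except maybe at one point'' clause implicit, whereas you prove it.
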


\begin{proof}
First notice  that $z_0$ is a real zero of $f_v \not\equiv 0$  if and only if it is a common zero of $f_l, l = 1,2,3.$
If $z_0$ is a real zero of $f_v^s \ne 0$
then $f_1^2(z_0)+ f_2^2(z_0) + f_3^2(z_0) = 0$ which implies that $z_0$ is a common
zero of all the components of $f_v$ of multiplicity $k$ for some $k \in \bN,$ since $f_l(z_0) \in \R, l = 1,2,3.$  Therefore we may factor out
a slice--preserving factor $(q - z_0)^{k}$
from $f_1,f_2,f_3$ and hence the function $(q - z_0)^{-k}f_v $ is nonvanishing on a neighbourhood of $z_0.$
In other words, one can locally write $f_v = \lambda w,$ where $w$ does not
have real zeroes and $\lambda \not \equiv 0$ is a slice--preserving function.
If $f_v$ has a spherical zero  $\bS_{z_0}=\{a + Ib\ :\ I\in\mathbb{S}\}$,
then $f_l(a +Ib) =a_l + I b_l,\, l = 1,2,3$ for any $I\in\bS$.  For $i =I$ we have
that $f_2(z_0)j + f_3(z_0)k = x_1 j + x_2 k$ and $f_1(z_0)i=a_1i-b_1,$ hence the
condition $f_1(z_0)i + f_2(z_0)j + f_3(z_0)k = 0$ implies $a_1 = b_1 = 0$ and, analogously, $a_l = b_l = 0$ for $l = 2,3,$ hence
 $f_1,f_2,f_3$ all have $\bS_{z_0}$ as a spherical zero.
If the spherical zero is of multiplicity $k$,
then we can factor out a term $ (q^2 - 2q\Re(z_0) + |z_0|^2)^{k}$ from $f_l$, with  $l=1,2,3$.
\end{proof}

\begin{definition} Let $\omega$ be a  vectorial class on an axially symmetric domain $\Omega.$
  Define $${\cS\mathcal{R}}_{\omega}(\Omega)=\{g \in {\cS\mathcal{R}}(\Omega)  : [g_v] _p\in \omega(p), \, \forall p\in \Omega\}. $$ For the case $\omega=[0]$, notice that by definition ${\cS\mathcal{R}}_{[0]}(\Omega)={\cS\mathcal{R}}_{\R}(\Omega)$.
\end{definition}

If $f,g \in \mathcal{SR}_{\omega}(\Omega)$ then also $f*g = g*f \in \mathcal{SR}_{\omega}(\Omega),$ because the last term in the Formula (\ref{*product}) vanishes. In particular, since $\mathcal{SR}_{\R}(\Omega)\subseteq \mathcal{SR}_{\omega}(\Omega)$ for any $\omega,$  if $f \in \mathcal{SR}_{\omega}(\Omega)$ and
$g \in \mathcal{SR}_{\mathbb{R}}(\Omega)$ then $f*g \in \mathcal{SR}_{\omega}(\Omega)$.

Remark~\ref{minimalrep} suggests now the following

\begin{definition} Let $\Omega$ be an axially symmetric domain and let
$\omega \in {\mathcal V}(\Omega).$ Let $U \subset \Omega$ be an axially symmetric open set and let $w \in \cS\mathcal{R}_{\omega}(\Omega)$
be the vectorial part of a slice--regular function. Then
 $w$ is called {\em minimal on  $U$} if it has neither real nor spherical zeroes on $U.$
\end{definition}

We have shown that in the case $f^s_v \not\equiv 0,$ spherical and real zeroes of the vectorial part are precisely the
common zeroes of the components of $f_v.$
The vectorial function $w(z) =z^2 i + \sqrt{2}z j + k$ is an example of a
minimal representative; it has an isolated zero on the unitary sphere $\bS$, namely $z_0=\frac{k-i}{\sqrt 2} j,$  and  its
symmetrization $w^s(z) = (z^2 + 1)^2$
vanishes on $\mathbb{S}$. Notice furthermore,  that $z^2 + 1$ is not a
common factor of the components of $w$.

For all $f_v\in\mathcal{SR}^v(\Omega)$, the factorization $f_v = \lambda w$ with $w\in \mathcal{SR}_{[f_v]}(\Omega)$ minimal  and $\lambda \in \mathcal{SR}_{\R}(\Omega)$  is unique up to a
multiplication by a slice--preserving nonvanishing function. If $w_{\alpha},w_{\beta}$ are two minimal representatives of the same vectorial class on an axially symmetric subset $U \subset \Omega,$  then
$a w_{\alpha} = b w_{\beta}$ for some $a,b \in \mathcal{SR}_{\R}(U)$ and by minimality both $a$ and $b$ are nonvanishing on $U;$ moreover the zero sets of $w_{\alpha}$ and $w_{\beta}$ coincide.
Therefore, given a vectorial class $\omega$ on an axially symmetric domain $\Omega$, we can define the zero set $Z(\omega)$ of  $\omega.$
\begin{definition}\label{zerosets2}
Let $\Omega$ be an axially symmetric domain and let
$\omega \in {\mathcal V}(\Omega)$ be a vectorial class.

If $\omega \neq 0$, let $w$ be a minimal representative of $\omega$ on an axially symmetric open set $U \subset \Omega.$ Define
$Z(\omega) \cap \U = w^{-1}(0).$ Then the zero set $Z(\omega)$ of  $\omega$ is defined to be the union of all zeroes $w^{-1}(0)$ where $w$ runs over  minimal representatives of $\omega$ on open axially symmetric subsets $U$ of $\Omega.$

If $\omega = [0]$, then we define $Z([0]) = \varnothing.$
\end{definition}

\begin{proposition}\label{zerosets1}
 Let $\Omega$ be an axially symmetric domain and let
$\omega \in {\mathcal V}(\Omega)$ be a vectorial class. 
 If
  $w$ is a local minimal representative of $\omega$ on an axially symmetric domain $U \subset \Omega$, then
  \begin{itemize}
  \item [(i)]  if $w^s \not\equiv 0,$ then $Z(\omega)\subset \Omega$ is a discrete set of non real quaternions;
  \item[(ii)]  if $w^s \equiv 0$ but $w \not \equiv 0,$  we have $\bS Z(\omega) = \Omega,$ there is precisely one zero of $Z(\omega)$ on each sphere and, moreover, $\Omega \subset \H \setminus \R.$
  \end{itemize}
  \end{proposition}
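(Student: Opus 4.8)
The plan is to reduce both assertions to an analysis of the zero set $w^{-1}(0)$ of a single local minimal representative $w$. As recalled just before the statement, any two minimal representatives of $\omega$ on overlapping axially symmetric open sets differ by multiplication by nowhere--vanishing slice--preserving functions, hence have the same zero set; so $Z(\omega)\cap U=w^{-1}(0)$ for every minimal representative $w$ on an axially symmetric open set $U\subseteq\Omega$, and since $\Omega$ is covered by such $U$'s it suffices to establish the claimed structure of $w^{-1}(0)$ on each $U$. The algebraic starting point is that $w$ is vectorial, so $w^c=-w$ and therefore $w*w=-w*w^c=-w^s$. Combining this with the evaluation rule for the $*$-product — $(f*g)(q)=0$ whenever $f(q)=0$, and $(f*g)(q)=f(q)\,g(f(q)^{-1}q\,f(q))$ otherwise (see \cite{GSS,GP1}) — one obtains at once the inclusion $w^{-1}(0)\subseteq (w^s)^{-1}(0)$, since $w^s=w*w^c$ vanishes wherever its left factor $w$ does.

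For part (i), assume $w^s\not\equiv0$. Being slice--preserving and slice--regular, $w^s$ is induced by a $\C$--valued holomorphic stem function on $U_{\i}$, whose zeros are isolated; translating back, $(w^s)^{-1}(0)$ is a disjoint union of isolated real points and isolated $2$--spheres $x+\bS y$. Now I invoke minimality of $w$: having no real zeros, $w$ avoids the real points of $(w^s)^{-1}(0)$; having no spherical zeros, $w$ is not identically zero on any of the spheres in $(w^s)^{-1}(0)$, so by the structure theorem for zeros of slice--regular functions it has at most one zero on each such sphere (\cite{GSS,GP1}). Hence $w^{-1}(0)=Z(\omega)\cap U$ is a discrete subset of $U$ made of non--real quaternions; letting $U$ range over a cover of $\Omega$ gives the claim.

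For part (ii), assume $w^s\equiv0$ and $w\not\equiv0$. First I show $\Omega\subseteq\H\setminus\R$: if $\Omega$ met $\R$ it would be a symmetric slice domain, and on the interval $\Omega\cap\R$ the real--valued components of $w$ would satisfy $w_1^2+w_2^2+w_3^2=w^s\equiv0$, forcing $w_1=w_2=w_3=0$ there; since a real interval has accumulation points in every slice, the identity principle for slice--regular functions would give $w\equiv0$, a contradiction. Next, $w*w=-w^s\equiv0$, so the evaluation rule yields, for every $q\in U$ with $w(q)\neq0$, that $0=(w*w)(q)=w(q)\,w(\tilde q)$ with $\tilde q:=w(q)^{-1}q\,w(q)$; as $\tilde q$ has the same real part and modulus as $q$ it lies on $\bS_q$, whence $w(\tilde q)=0$. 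Thus $w$ vanishes somewhere on every $2$--sphere contained in $U$, and since $w$ has no spherical zeros it is not identically zero on any of them, so by the structure theorem it has exactly one zero on each. Therefore $Z(\omega)\cap U=w^{-1}(0)$ meets each sphere of $U$ in exactly one point, $\bS\big(Z(\omega)\cap U\big)=U$, and gluing over a cover of $\Omega$ gives $\bS Z(\omega)=\Omega$ with precisely one zero on every sphere.

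I do not expect a serious obstacle: the work is carried by two standard, citable facts — the description of the zero set of a slice--preserving slice--regular function (isolated real points and isolated spheres) and the ``at most one zero per sphere'' structure theorem. The points needing care, rather than difficulty, are: verifying that $\tilde q=w(q)^{-1}q\,w(q)$ genuinely lies on the sphere through $q$; noting that the dichotomy $w^s\equiv0$ versus $w^s\not\equiv0$ is a property of the class $\omega$ — so that (i) and (ii) are exhaustive and mutually exclusive — which is precisely the remark already recorded before the statement; and phrasing the zero--set arguments uniformly so that they cover both the symmetric slice domain and the product domain case.
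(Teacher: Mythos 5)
Your proof is correct and follows essentially the same route as the paper's: the dichotomy for the slice--preserving function $w^s$ (identically zero versus isolated real/spherical zeros), minimality to exclude real zeros and whole spheres of zeros, the identity principle on $\Omega\cap\R$ to force a product domain in case (ii), and the identity $w^s=w*w^c$ with $w^c=-w$ together with the $*$-product evaluation rule to locate exactly one zero on each sphere. The only difference is cosmetic: you spell out the inclusion $w^{-1}(0)\subseteq(w^s)^{-1}(0)$ and the one-zero-per-sphere count in case (i), which the paper dismisses with ``obviously.''
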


\begin{proof}
Let $w$ be a local minimal representative of $\omega \ne [0]$ on a basic domain $U \subset \Omega.$ Then $w^s$ is slice--preserving and hence it is either identically equal to $0$ or has isolated real or spherical zeroes (or no zeroes).
If $w^s$ is not identically equal to $0$, the same holds for any other minimal representative by the identity principle and then obviously the set $Z(\omega)$ is either discrete or empty.

Assume that $w^s \equiv 0$ but $w\not\equiv 0.$ Recall that, for any other representative $\tilde{w}$ we have $\tilde{w}^s \equiv 0$, by the identity principle.
The identity principle implies that $\Omega\subset \H \setminus \R$  is a product domain. Indeed, if $\Omega$ is a slice domain, then on the real axis the symmetrization $w^s$ is a sum of
squares of real numbers and hence, if it is identically $0$, then by the
identity principle also $w \equiv 0$ in an axially symmetric domain containing $\Omega\cap \R$, and hence in the entire slice domain $\Omega$; contradiction.

 Now, $w^s \equiv 0$ on the product domain $\Omega$ implies that $w$ has a zero on each sphere, and can have neither a sphere of zeroes nor a real zero, since it is a minimal representative of $\omega$.

 Since $w^s = w * w^c$ and $w^c = -w,$ the equation $w^s(z_0) = 0$ implies that either $w(z_0) = 0$ or if $w(z_0) \ne 0,$ $w^c(z) = -w(z)= 0$ for $z =w(z_0)^{-1} z_0 w(z_0) \in \bS_{z_0}.$ If there were two distinct zeroes on $\bS_{z_0}$ then extension formula would imply that $w(\bS_{z_0}) = 0,$ which contradicts the assumption that $w$ is minimal.

\end{proof}

If $f_v = \lambda w$ with $w$ minimal and $\lambda$ a slice--preserving function is the (local) decomposition of $f_v$, then
$f_v^s = \lambda^2 w^s.$ If $w^s$ is nonvanishing on
a basic domain $U \subset \Omega,$ then
one can define square roots  of $f^s_v$ and $w^s$ (denoted as $\sqrt{f^s_v}$ and $\sqrt{w^s}$) (see \cite{AdF}, Proposition 3.1. and next sections) and find that $\sqrt{f_v^s}=\pm \lambda \sqrt{w^s}$.  Therefore we can state that:

\begin{proposition} \label{2normalized}

Let $\Omega$ be a basic domain, let $\omega\neq[0]$ be a vectorial class on $\Omega$ with $Z(\omega)=\varnothing$ and let $f_v \in \mathcal{SR}_\omega (\Omega)$. If $w$ is a minimal representative of $\omega$ in $\Omega$, then the \emph{normalized} vectorial function
$$f_v/\sqrt{f_v^s} \in \{ \pm w /\sqrt{w^s}\}$$ is minimal and such that
$$(f_v/\sqrt{f_v^s})^s = 1$$
in $\Omega$.
\end{proposition}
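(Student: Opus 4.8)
The plan is to reduce the statement to the local factorization $f_v=\lambda w$ with $\lambda\in\mathcal{SR}_{\R}(\Omega)$ and $w$ minimal, together with the relation $\sqrt{f_v^s}=\pm\lambda\sqrt{w^s}$ recorded just above the statement, and then to let the slice--preserving factor $\lambda$ cancel. First I would observe that, since $\omega\neq[0]$ and $Z(\omega)=\varnothing$, the minimal representative $w$ has no zeros on $\Omega$ (by the definition of $Z(\omega)$); and since any zero of $w^s=-w*w$ forces a zero of $w$ on the corresponding sphere (exactly as in the proof of Proposition~\ref{zerosets1}), $w^s$ is nonvanishing on $\Omega$ as well. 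As $\Omega$ is a basic domain, the slice--preserving square root $\sqrt{w^s}\in\mathcal{SR}_{\R}(\Omega)$ then exists and is nonvanishing, so $w/\sqrt{w^s}$ is a genuine element of $\mathcal{SR}^v(\Omega)$.

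The second step identifies $f_v/\sqrt{f_v^s}$. Using $f_v=\lambda w$ and choosing the sign so that $\sqrt{f_v^s}=\lambda\sqrt{w^s}$ (the other choice merely flips the global sign), and recalling that $\lambda$ and $\sqrt{w^s}$ are slice--preserving, hence central (Proposition~\ref{regularitygp}), I would compute $\sqrt{f_v^s}*(w/\sqrt{w^s})=\lambda\sqrt{w^s}\,w\,(\sqrt{w^s})^{-1}=\lambda w=f_v$, whence $f_v/\sqrt{f_v^s}=w/\sqrt{w^s}$; in particular $f_v/\sqrt{f_v^s}\in\{\pm w/\sqrt{w^s}\}$, and the normalized function extends slice--regularly across the zeros of $f_v^s=\lambda^2 w^s$, the would--be poles being cancelled by the corresponding zeros of $f_v$. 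Minimality is then immediate: $\pm w/\sqrt{w^s}=\pm(1/\sqrt{w^s})\,w$ with $1/\sqrt{w^s}\in\mathcal{SR}_{\R}(\Omega)$ nonvanishing, so it is linearly dependent with $w$ over $\mathcal{SR}_{\R}(\Omega)$ and therefore represents $\omega$, while its zero set equals that of $w$, which is empty.

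For the last claim set $u:=f_v/\sqrt{f_v^s}=\pm w/\sqrt{w^s}$; since $u$ is vectorial, $u^c=-u$ and $u^s=u*u^c=-(u*u)$. Because $1/\sqrt{w^s}$ is central and $w*w=-w^s$, one gets $u*u=(1/w^s)(w*w)=-1$, hence $u^s=1$ identically on $\Omega$ (where $w^s$ is nowhere zero). All of these manipulations are routine; the one point that requires genuine care — and the step I would treat most carefully — is the passage from the hypothesis $Z(\omega)=\varnothing$ to the nonvanishing of $w^s$ on \emph{all} of $\Omega$, since this is what makes $\sqrt{w^s}$, and hence the entire normalization, globally well defined; it is precisely here that the relation between the zeros of $w$ and those of $w^s=-w*w$ enters.
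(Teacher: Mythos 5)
Your proposal is correct and follows essentially the same route as the paper, which simply declares the result ``straightforward'' after recording the factorization $f_v=\lambda w$ and the relation $\sqrt{f_v^s}=\pm\lambda\sqrt{w^s}$; you supply exactly the cancellation of $\lambda$ and the computation $w*w=-w^s$ that the paper leaves implicit. Your extra care in deducing that $w^s$ is nonvanishing from $Z(\omega)=\varnothing$ (via the sphere argument from Proposition~\ref{zerosets1}) is a welcome filling-in of a detail the paper takes for granted.
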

\begin{proof}
After the premises to this statement, the proof is straightforward.
\end{proof}


%
%
\section{Local definition of $\log_*$.}\label{section_cases}

We now reach the heart of the problem: if $\Omega$ is an axially symmetric domain of $\H$, given $g\in\cS\mathcal{R}(\Omega)$ not vanishing in $\Omega$ and $z \in \Omega$  an arbitrary point, find an open axially symmetric neighbourhood  $U$ of $z$ and a function $f \in \mathcal{SR}(U)$ such that
\[\exp_*f=g\, \mbox{ \ \ \ \ on }\, U.\]

The assumption that $g\in\cS\mathcal{R}(\Omega)$  is a nonvanishing  function in $\Omega$
is intrinsic with the problem, since, where defined, the function $\exp_*f$ is nonvanishing.
We will find necessary and sufficient conditions on $g$ to define a local logarithmic function of $g$.

Let us assume henceforth that $\Omega$ is a basic domain  in $\mathbb{H}.$
After writing $g=g_0+g_v$ and $f=f_0+f_v$ as in the previous section, we'll proceed by steps.

\subsection{Case 0: $g\in \cS\mathcal{R}(\Omega)$ is a constant function }\label{case0}
  To avoid confusion, the constant function $q_0$ will be denoted by $C_{q_0}$.

Consider first the case $q_0= 1.$ Then the principal branch of the logarithm can be defined, because the function $\exp_0$ is a bijection between $\{q\ :\ \Im(q) \in \bS [0,\pi)\}$ and $ \H \setminus (-\infty,0]$ and so we can define
$$
  \log_{*,0,0}:= \log_0(C_{1}) = 0,
$$
in the whole $\H.$ Choose a point $z_0 \in \H$ and let $\omega$ be any vectorial class with $z_0 \not \in \bS Z(\omega).$ Let $w$ be one of the two normalized minimal nonzero representatives of $\omega$ (see Proposition~\ref{2normalized})  defined on a basic neighbourhood $U_{z_0}$ of $z_0.$ Then,  for all $n\in \Z$, the function
\begin{equation}\label{cost1}
  \log_{*,0,2n w}(C_1):= 2 \pi n w
\end{equation}
also satisfies $\exp_*(\log_{*,0,2n w }C_1) = 1$ (see Formula \eqref{polar}).
If, moreover, $U_{z_0} \subset \H \setminus \R$  is a product domain then the imaginary unit function ${\mathcal I}$ is a well-defined
slice--preserving function
and hence  we have the possibilities
\begin{equation} \label{f2}
\begin{array}{l}
  \log_{*,m,nw}(C_1):=m \pi {\mathcal I} + n \pi  w,
\end{array}
\end{equation}
on $U_{z_0},$ where  $m,n \in \Z$ are such that $m+n\equiv 0\ (\mod 2)$. Notice that if $U_{z_0}$ is a basic slice domain, then  the only possibilities are those appearing in Formula \eqref{cost1}.

For any constant function $C_{q_0}$, $q_0 \in \H \setminus(-\infty,0], $ the situation is completely analogous, and we have
$$
  \log_{*,0,2n w}(C_{q_0}):= \log_0(q_0)  + 2n \pi  w.
$$
(for any $n\in \Z$) on a basic slice neighborhood $U_{z_0}$ of $z_0$, and
\begin{equation}
\begin{array}{l}
  \log_{*,m,nw}(C_{q_0}):= \log_0(q_0) + m \pi {\mathcal I} + n \pi  w,
\end{array}
\end{equation}
on $U_{z_0},$ where  $m,n \in \Z$ are such that $m+n\equiv 0 \ (\mod 2)$.

Consider now the constant function $C_{-1}.$ Define, for $n\in \Z$,
$$
 \log_{*,0,2n w }(C_{-1}):=(2n+1)\pi w.
$$
This function satisfies
$\exp_*(\log_{*,0,2n w}(C_{-1})) = -1$ and  on a basic product neighborhood $U_{z_0}$ of a point $z_0 \in \H \setminus \R$ we also have \begin{eqnarray*}
 \log_{*,m,n w}(C_{-1})&:=& \pi w + (m \pi {\mathcal I} + n \pi  w) = \pi w + \log_{*,m, nw} C_1,
%
\end{eqnarray*} for $n+m \equiv 0 \ (\mod 2)$.
With the notation of the previous section, for any constant function $C_{q_0}$, $q_0 \in \H \setminus\{0\}, $ we have
$$
 \log_{*,m, n w}C_{q_0} \in \mathcal{SR}_{\omega}(U).
$$

{
\begin{remark}\label{rem51}{\em
Once a slice--regular logarithm of two slice--regular functions  $g, h\in \cS\mathcal{R}(U)$ is defined in a basic domain $U$,
one can always add to each logarithm a vectorial function $2n \pi w$  (with $w$  any normalized minimal representative of a vectorial class $\omega$  in the basic domain $U$
with $Z(\omega) \cap U = \varnothing$),  but for the price of losing the property $\exp_*(\log_*(g) + \log_*(h)) = g*h .$
Indeed, notice that, for example, the equality
$$
  \exp_*(\log_{*,2m_1,2n_1w_1}(C_{1}) + \log_{*,2m_2,2n_2 w_2}(C_{1})) = 1
$$
is not necessarily valid, if $w_2 \not\in [w_1]$ (compare $(ii)$ in Theorem 4.14, \cite{AdF}).
The property $\exp_*(\log_*(g) + \log_*(h)) = g*h $ is still valid for functions  $g,h\in \exp_*^{-1}(\mathcal{SR}_{\R}(U)) \cap  (\mathcal{SR}_{\omega}(U)).$}\\
\end{remark}
}

Remark \ref{rem51} suggests to restrict our considerations to the sets
$\exp_*^{-1}(\mathcal{SR}_{\omega}(U)) \cap  (\mathcal{SR}_{\omega}(U)).$
{ According to Proposition \ref{zerosets1} and Definition \ref{zerosets2} we have the following four different possibilities with respect to the vectorial classes and the structure of their zero sets.}

\subsection{Case 1: $g\in \cS\mathcal{R}_\R(\Omega)$ is slice--preserving, i.e. $g_v \equiv 0$}\label{C1}
Let's now consider the general case of a nonvanishing slice--regular and  slice--preserving function $g = g_0.$
In this case the involved regular functions behave like holomorphic functions on each slice, but at the same time topological obstructions near the real axis complicate the problem  of finding a logarithmic function.

We assume that the necessary condition expressed by Formula ~\eqref{cond1}, i.e., $g_0(\Omega \cap \R) \subset (0,+\infty)$, holds. Then,
 since $g\neq 0,$ one can locally define a
logarithmic function of $g$ in the following way. Consider a point $z_0 \in \Omega.$

If $z_0\not \in \R$, then we have the following possibilities:
\begin{itemize}
\item $g(z_0) \in \H \setminus (-\infty,0]$, then a logarithmic function  of $g$ can be
    defined in a neighbourhood of $z_0$ since the function $\exp_0$ is a bijection between $\R
    \times \bS [0,\pi)$ and $ \H \setminus (-\infty,0]$; indeed, locally, for all $m\in \Z$, we can define
    $$
   \log_{*,2m,0}(g):=\exp_0^{-1}\circ g +2m\pi\mathcal I .
$$
\item $g(z_0) \in (-\infty,0)$. In this case a logarithmic function can be locally defined for $-g$ as in the previous point. And then we can exploit the equality:
$$
\log_{*,2m,0}(g)=\log_{*,2m,0}(-g)+\pi \mathcal I= \exp_0^{-1}\circ (-g) +(2m+1)\pi\mathcal I
$$
\end{itemize}
If $z_0\in \R$, then by hypothesis $ g(z_0) > 0$ and we have the only possibility:
$$
   \log_{*,0,0}(g):= \exp_0^{-1}\circ g
$$
since the function $\mathcal I$ cannot be defined on the real axis.
%

\begin{remark}  {\em Condition $(\ref{cond1})$ is necessary if we want the logarithm of a slice--preserving function to be slice--preserving. If not, then this condition is no longer needed. Indeed, consider any normalized minimal representative $w$ of any vectorial class defined on an axially symmetric neighbourhood $U$ of $z_0$ which is nonvanishing on $U$ and assume that $g(z_0) < 0$ for some $z_0 \in \R.$ Then
$\log_{*,0,0}(-g)$ is defined and
$$
  f = \log_{*,0,(2n+1)w}(-g) = \log_{*,0,0}(-g) +  (2n+1) \pi {w}
$$
satisfies
\begin{eqnarray*}
  \exp_*{f} &=&  - g(-1)^0 (\mu((2n +1 )^2\pi^2) + \nu(((2n+1)^2\pi^2) (2 n +1) \pi w )= \\
  &=&- g(-1)^{2n+1} = g.
\end{eqnarray*}
}
\end{remark}

\begin{remark}{\em The above considerations imply that given a nonvanishing slice--regular and slice--preserving function $g\in \cS\mathcal{R}_\R(\Omega)$
          (not necessarily satisfying condition (\ref{cond1}), that $g(\Omega \cap \R) \subset (0,+\infty)$), one can always locally define a slice--preserving logarithmic function of at least
          one of the two functions  $g, -g$ or both, depending on the domain of definition.
   }
\end{remark}



%

\subsection{Case 2: $g\in \cS\mathcal{R}(\Omega)$ with $g_v \not\equiv 0, \, g_v^s \equiv 0$}\label{C2}
Consider now $g=g_0+g_v$ such that $g$ is nonvanishing  and $g_v$ is  not
identically $0$ but $g_v^s$ is (which implies $g_0$ is nonvanishing).  Then  $\Omega$ is a product domain since otherwise  $g_v$ would be identically $0$ because of the
identity principle (Proposition \ref{zerosets1}). Therefore $\log_{*,2m,0}g_0$  can be locally defined on a basic neighbourhood $U$ of any point of $\Omega$, 
for all $m\in\mathbb{Z}$. The class $[g_v] = \omega$ does not have a normalized minimal representative, therefore in this case we use the notation
$\log_{*,m, 0 \cdot [g_v]}$ to indicate, that the resulting function is in the class $\mathcal{SR}_{\omega}(\Omega)$ but there are no periods in
any minimal representative of $[g_v].$

In general, whenever $g=g_v$ with $g_v^s\equiv0$, the equality
\[\exp_*g=\exp_* g_v= 1+g_v\]
holds, since  $g_v^{*2}=-(g_v)^s=0$ and then $g_v^{*k}$
vanishes for all $k\geq 2$. In these cases we put

\[\log_* (1 + g) = \log_{*,0,0 \cdot [g_v]}(1+g_v):=g_v=g.\]
Assume now $g=g_0+g_v$ with
$g_0$ nonvanishing and $g_v^s\equiv0$ in $\Omega$.
Then one can write $g=g_0\left(1+\dfrac{g_v}{g_0}\right)$; hence,
from $\exp_*(\log_{*,0,\cdot [g_v]}g)=g$, one concludes that

\[\log_{*,0,\cdot [g_v]}g = \log_{*,0,0} g_0 + \log_{*,0,\cdot [g_v]}(1+g_v/g_0)= \log_{*,0,\cdot [g_v]} g_0 + \dfrac{g_v}{g_0};\]
more in general,  on a product domain $U\subseteq \Omega$,
\[\log_{*,m,0}g: = \log_{*,m,0} g_0  + \dfrac{g_v}{g_0},\quad m\in\Z,\]
which completely describes all possible solutions for $\exp_*f = g$ with the given assumptions for $g$, namely
$g$ not vanishing,  $g_v\not \equiv 0$ and $g_v^s\equiv0$ in $\Omega$.

\begin{example}{\em
Consider the function
$$z=x+Iy\mapsto\Psi(x+Iy):=Ii+j;$$
clearly $\Psi=\Psi_0+\Psi_1i+\Psi_2j+\Psi_3k$ is well-defined,  slice--regular in $\Omega=\H\setminus \R$ and constant on any slice and $\Psi|_{\Omega_{-k}}\equiv 0$.
Moreover, since $\Psi_0=0$, $\Psi_1=\mathcal{I}$, $\Psi_2=1$  and $\Psi_3=0$, then $\Psi_v^s=0$. Hence
\[\exp_* \Psi= 1+\Psi.\]
Notice that $\Psi_1=\mathcal{I}\in\cS\mathcal{R}_{\mathbb{R}}(\H\setminus \R)$ and cannot be extended continuously to   $\H$.
Consider now the function $g(z)=z+\Psi(z)$; clearly $g$ is a nonvanishing
slice--regular and slice--preserving function in $\Omega=\H\setminus \R$. Furthermore, $g_0=\Id$ and $g_v=\Psi$ and so,
for any $z\in\H\setminus\R$, we have
\begin{eqnarray*}(\log_{*,k}g)(z)& =& [\log_{*,k}(\Id+\Psi)](z)=[\log_{*,k}(g_0+g_v)](z)\\
  &=& \log_{*,k} (z)  + \dfrac{\Psi(z)}{z}\\
&=&\log (|z|)+[\arg_{\cI(z)}(z)+2k\pi]\mathcal{I}(z)+\dfrac{\Psi(z)}{z},
\end{eqnarray*}
where $\log$ represents the usual real natural logarithm.}
\end{example}

\subsection{Case 3:  $g\in \cS\mathcal{R}(\Omega)$ and  $z_0\in \Omega$  such that $Z([g_v]) \cap \{\bS_{z_0}\} = \varnothing$}\label{C3}
The condition $Z([g_v]) \cap \{\bS_{z_0}\} = \varnothing$ implies the following:
either  $g_v \ne 0$ on $\bS_{z_0}$ or there is a factorization $g_v = \lambda \tilde w$
with $\tilde{w} \ne 0$ on $\bS_{z_0}.$ Hence the  function $h := \sqrt{\tilde{w}^s}$ 
is locally well-defined on a basic open neighbourhood $U$ of $z_0$ and
satisfies $h^2 =\tilde{w}^s.$ Put $\sqrt{g_v^s}:= \lambda \sqrt{\tilde{w}^s}.$
The normalized vectorial function
$$
   \frac{g_v}{\sqrt{g_v^s}}= \frac{\tilde{w}}{\sqrt{\tilde{w}^s}}=: w
$$ is thus well-defined in $U.$
   Similarly, the function $\pm \sqrt{g^s}$ is well-defined in
   $U.$ If $U$ intersects the real axis, we choose the sign
   so that $\sqrt{g^s}(U \cap \R) \subset
   (0,+\infty).$ Then $f_0:= \log_{*,0,0} \sqrt{g^s}$ is
   well-defined. If $U$ does not intersect the real axis then we
   define $f_0$ in accordance to the next formula
   \begin{equation}\label{values}
\begin{array}{ll}
  f_0 := \log_{*,2m,0} (\sqrt{g^s}), &\mbox{ if } \sqrt{g^s}(\bS_{z_0})\subset \H \setminus (-\infty,0)  \mbox{ and }
  \\
   f_0 :=\log_{*,2m+1,0} (-\sqrt{g^s}),& \mbox{ if } \sqrt{g^s}(\bS_{z_0})  \subset(-\infty,0).
   \end{array}
\end{equation}
with $m \in \mathbb{Z}.$
Notice also that the image of a sphere $\bS_{z}$ by a slice--preserving function is always a sphere centered on the real axis.

     For $f = f_0
   + f_v = \log_*g$ following Formula (\ref{polar}), we want the
   listed equalities to hold:
\begin{eqnarray*}
   \frac{f_v}{\sqrt{f_v^s}}&=& \frac{g_v}{\sqrt{g_v^s}},\\
   \cos_*{\sqrt{f_v^s}} &=& \frac{g_0}{\sqrt{g^s}},\\
   \sin_*{\sqrt{f_v^s}} &=& \frac{\sqrt{g_v^s}}{\sqrt{g^s}}.\\
\end{eqnarray*}
 For each $I \in \bS$ define the complex manifold $\Sigma_I$ to be the regular
 set $s^{-1}(1)$ for $s \colon \C_I^2 \ra \C_I,$ $s(u,v) = u^2 + v^2.$ It
 is not difficult to show that the mapping
$$
  T\colon \C_I \rightarrow \Sigma_I,\quad T(q)=(\cos q,\sin q)
$$
is a covering map  and by construction we have
$$
  G:=\left(\frac{g_0}{\sqrt{g^s}},\frac{\sqrt{g_v^s}}{\sqrt{g^s}}\right)\colon U_I \rightarrow \Sigma_I.
$$
There exist a lift $\tilde{G}$ such that the diagram
$$\xymatrix
   { & \C_I \ar[dr]^{T} & \\
     U_I \ar[rr]^{G} \ar[ur]^{\tilde{G}} & & \Sigma_I
   }
$$
commutes, i.e. $T\circ \tilde{G} = G.$

If $U_I$ is simply connected, $U \cap \R$ is an open interval and $G(U \cap \R) \subset S^1 \subset \R^2 \subset (\C_I)^2$, since all  the functions are slice--preserving. The only possibility that  both $\sin(z)$ and $\cos(z)$ are real, is, that $z$ is real. Therefore for any  lift $\tilde{G}$ the restriction  $\tilde{G}|_{U \cap \R}$ is real-valued and hence satisfies the reflection property $\tilde{G}(z) = \overline{\tilde{G}(\bar{z})}.$ If $U_I$ has connected components $U_{I,n}, n = 1,2,$ then first define the function $\tilde{G}$ on $U_{I,1}$ to be an arbitrary lift of $G|_{U_{I,1}}$ and extend the definition to $U_{I,2}$ by reflection property.
Define
$$
  \sqrt{f_v^s}:=  \tilde{G}
$$
on $U_I.$
The reflection property guarantees that $\sqrt{f_v^s}$  has a slice--preserving extension to $U.$

In the case $U \cap \R \ne \varnothing$ the final formula is
\begin{equation}\label{fff}
  \log_{*,0,2n w } g=\log_{*,0,0}(\sqrt{g^s}) + (\sqrt{f_v^s} + 2 n \pi) \frac{g_v}{\sqrt{g_v^s}}.
\end{equation}
If $U \cap \R = \varnothing$ we also have periodicity in the scalar part and the  formula is
\begin{equation}\label{fff1}
  \log_{*,m ,n w} g=\log_{*, 0, 0}(\sqrt{g^s}) + m \pi \cI + (\sqrt{f_v^s} +  n \pi) \frac{g_v}{\sqrt{g_v^s}},
\end{equation}
where $m,n\in \Z$ are such that $m+n\equiv 0 \ (\mod 2)$ and the logarithm $f_0:= \log_{*,0,0}(\sqrt{g^s})$ is  chosen in accordance with \ref{values}.

Notice that, contrary to the previous Cases 1 and 2 (and the next case, Case 4), in  Case 3 one cannot specify the ``principal branch", unless one chooses a specific point in the domain and specific normalized minimal representative.

\subsection{Case 4: $g\in \cS\mathcal{R}(\Omega)$ and  $z_0\in \Omega$  are such that $z_0 \in \bS Z([g_v])$}\label{C4}
Without loss of generality we assume that $z_0 \in Z([g_v]),$ since the logarithmic function is to be defined on a basic neighbourhood of $z_0.$
We have the following two possibilities:

$(i)$ $z_0$ is a nonreal isolated zero of $g_v,$

$(ii)$ $z_0$ is a nonreal isolated zero and  $\bS_{z_0}$ is a spherical zero of $g_v.$

Let's first consider  case $(i).$
Since $g^s(z_0) = g_0^2(z_0) \ne 0,$ we define
\begin{equation}\label{root}
  \sqrt{g^s}:=g_0 \sqrt{1 + \frac{g_v^s}{g_0^2}}
\end{equation}
with $\sqrt{\phantom{a}}$  defined using the principal branch of the logarithm (see Formula~\eqref{radix}).  The function $\sqrt{g^s}$ is a slice--preserving and slice--regular function  with  $g_0(z_0)=\sqrt{g^s}(z_0).$
This function  is well-defined in a neighbourhood of $\bS_{z_0}.$
Define
\[
  {f_v^s} :=  \mu^{-1} \left( \left(\sqrt{1 + \frac{g_v^s}{g_0^2}} \right)^{-1} \right)
\]
where $\mu^{-1}=\mu^{-1}_0$ is the inverse function of $\mu$ from a
neighbourhood of $1$ to a neighbourhood of $0,$ so that ${f_v^s}(z_0)
= 0$  (see Subsection~\ref{properties_mu}). This is equivalent to the choice of the principal branch of
$\arccos$ denoted by $\arccos_{{0}}$ indeed
$$
 f_v^s=\left(\arccos_{0} \dfrac{g_0}{\sqrt{g^s}}\right)^2.
$$ Recall that the function $\mu$
is locally invertible near $0$ because $\mu'(0)=(-1/2) \nu(0) = -1/2.$
If the function $-\sqrt{g^s}$ is chosen instead, $f_v^s$ cannot be defined
since the function $\mu$ has branching points at
\[\mu^{-1}(-1) = \{(2k+1)^2\pi^2, k \in \bN\}.\]
 \begin{remark} The
  isolated nonreal zeroes of the vectorial part force the choice of
  the function $\sqrt{g^s}$ to be such that
  $g_0(z)/\sqrt{g^s}(z) = 1$ for every zero $z$ of $g_v^s$.
\end{remark}

For the definition of $f_0$ we have to calculate a
logarithm of $\sqrt{g^s}$ depending on the two cases as in (\ref{values}):
\begin{equation*}
\begin{array}{ll}
  f_0 := \log_{*,2m,0} (\sqrt{g^s}), &\mbox{ if } \sqrt{g^s}(\bS_{z_0})\subset \H \setminus (-\infty,0)  \mbox{ and }
  \\
   f_0 :=\log_{*,2m+1,0} (-\sqrt{g^s}),& \mbox{ if } \sqrt{g^s}(\bS_{z_0})  \subset(-\infty,0).
   \end{array}
\end{equation*}
with $m \in \mathbb{Z}.$
Since  $f_v^s(z_0) = 0$  in both cases, we have $\mu(f_v^s)(z_0) = \nu(f_v^s)(z_0) = 1.$

Define the vectorial function $f_v$ to be
\[
  f_v = \exp_*(-f_0) \frac{g_v}{\nu(f_v^s)}.
\]
Then $f=f_0 + f_v$ solves $\exp_*f = g.$ The complete formula is
\begin{eqnarray}\label{case4i}
  & &\log_{*,m,0\cdot [g_v]} g\\ \nonumber &=&  \log_{*,m,0} ((-1)^m \sqrt{{g^s}}) +  \frac{g_v}{\nu\left(\mu^{-1}\left(1/ \sqrt{1 + \frac{g_v^s}{g_0^2}}\right) \right)\sqrt{g^s}},
\end{eqnarray}
where $m$  depends on the values of $\sqrt{g^s}$ as in (\ref{values})  and $\nu$ is defined in Formulas~\ref{munu} and \ref{munu2}. Notice that the period in the imaginary directions appears from the definition of the branches of logarithm for the slice--preserving part.

 If, in addition, $\bS_{z_0}$ is also a spherical zero of $g_v,$ the necessary condition, namely that $g_0(z)=\sqrt{g^s}(z)$ for every zero of $g_v^s,$ is fulfilled on the whole sphere $\bS_{z_0},$ hence the same formula applies to the case $(ii).$

\begin{remark}{\em
In the case where the zero $z_0$ has even multiplicity, the square root $\sqrt{g_v^s}$ is well-defined and  we could follow the construction for Case 3 and get  Formulae (\ref{fff}) or (\ref{fff1}); instead the vectorial part
$$
  (\sqrt{f_v^s} + 2 k \pi) \frac{g_v}{\sqrt{g_v^s}}
$$
has a pole unless we choose $k = 0.$ In addition, we must also have $\sqrt{f_v^s}(\bS_{z_0}) = 0$ and this implies that
$$
  \arccos\frac{g_0}{\sqrt{g^s}}(z_0) = 0,
$$
which at the end gives Formula (\ref{case4i}).
}
\end{remark}

\begin{remark}{\em
Let $f,g,w\in \mathcal{SR}_{\omega}(U)$ for $U$  a basic domain 
in $\H$ and let $w$ be a normalized representative of $\omega$ on $U.$
Assume that $\forall\ m,n \in \Z,$
  $\log_{*,m,nw} fg,$ $\log_{*,m,nw} f$ and $\log_{*,m,nw} g,$  all exist.  Since there is no `principal branch' in $w$, there is no reason that
  the equality
  \[ \log_{*,m,nw} fg = \log_{*,m_0,n_0w} f + \log_{*,m-m_0,(n-n_0)w} g\]
   should hold;  in general we have
   \[\log_{*,m,nw} fg = \log_{*,m_0,n_0w} f + \log_{*,m-m_0,(n-n_0)w} g + 2 k \pi  w.\]
}
\end{remark}

\section{Global definition of $\log_*$ and proof of Theorem \ref{general}}\label{6}

In this section we prove  Theorem \ref{general}, namely we consider the global problem of determining the logarithmic function of a given slice--regular function,
with the requirement that the logarithmic function defines the same vectorial class as the original function: if $\Omega$ is a basic domain of $\H$, given $g\in\cS\mathcal{R}(\Omega)$ not vanishing in $\Omega,$  find
$f\in\cS\mathcal{R}_{[g_v]}(\Omega)$ such that

\[\exp_*f=g \,\mbox{  on  }\,\Omega.\]

A classical result in complex analysis states that it is not possible to define $\log(z^2)$ on $\C \setminus \{0\}$
and hence it is also not possible to define a logarithmic function of $p_2(q)= q^2$
on $\H \setminus \{0\},$ although the function $p_2$
satisfies  the necessary condition (\ref{cond1}).

\subsection{Proof of  Theorem \ref{general}}\label{6.1}
The proof of Theorem \ref{general} is presented according to the four cases as in Section \ref{section_cases}. Here we recall the statement, before proving it.
\setcounter{section}{1}
 \begin{theo} Let $\Omega\subseteq \H$ be a basic domain and let $g \in \cS\mathcal{R}_{\omega}({\Omega})$ be a nonvanishing function.  Then it holds:
 \begin{enumerate}
  \item[$(a)$] if $\omega = [0]$,   a necessary and sufficient condition
    for the  existence of a $*$-logarithm of $g$ on $\Omega$, $\log_*g \in \mathcal{SR}_{[0]}(\Omega)= \mathcal{SR}_{\R}(\Omega),$
    is $$g(\Omega \cap \R) \subset (0,+\infty);$$
 \item[$(b)$] if $\omega \ne [0]$, then  if  $Z(\omega) = \varnothing$ or if \ $\bS Z(\omega) = \Omega$   there are no conditions, and a $*$-logarithm of $g$ on $\Omega$, $\log_*g \in \mathcal{SR}_{\omega}(\Omega)$, always exists;
    \item[$(c)$] if $\omega \ne [0]$ and $Z(\omega)$ is  discrete,  a sufficient  condition for the  existence of
      a $*$-logarithm of $g$ on $\Omega$, $\log_*g \in \mathcal{SR}_{\omega}(\Omega)$, is the validity of both inclusions
      \begin{equation}\label{realimage1}
        \sqrt{g^s}(\Omega \cap \R) \subset (0, +\infty)
      \end{equation}
      and
    \begin{equation}\label{counterex1}
      \frac{g_0}{\sqrt{g^s}}(\Omega) \subset \H \setminus (-\infty,-1].
    \end{equation}
 \end{enumerate}
 \end{theo}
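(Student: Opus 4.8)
The plan is to argue case by case, following the four cases of Section~\ref{section_cases}, and to globalize each of the local constructions given there. Everything rests on two facts. First, on a basic domain every connected component of every slice $\Omega_I$ is simply connected, so nowhere--zero holomorphic functions on $\Omega_I$ admit holomorphic logarithms and square roots, and covering maps can be lifted over $\Omega_I$. Second, a slice function equals $\mathcal{J}(H)$ for a holomorphic $H$ with $H(\bar z)=\overline{H(z)}$ exactly when it lies in $\mathcal{SR}_{\R}(\Omega)$, so a slicewise construction respecting this reflection property descends to a genuine slice--preserving slice--regular function on $\Omega$. The conditions in $(a)$ and $(c)$ will be precisely what keeps the relevant branches — the real logarithm near $\Omega\cap\R$, and the principal branch $\mu_0^{-1}=(\arccos_0)^2$ of Subsection~\ref{properties_mu} — globally defined. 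For $(a)$, necessity is already recorded ($\exp_*f(x_0)=\exp f(x_0)>0$ at real $x_0$ for $f\in\mathcal{SR}_\R(\Omega)$). For sufficiency, $g$ restricts on $\C_i$ to a nowhere--zero holomorphic function on $\Omega_i$ with $g(\bar z)=\overline{g(z)}$; pick a holomorphic logarithm on each simply connected component of $\Omega_i$, normalized on $\Omega_i\cap\R$ — where $g>0$ by hypothesis — to coincide with the real logarithm (on a product domain, fix the branch on one component and reflect to the other). Uniqueness of logarithm branches on connected sets together with $g(\bar z)=\overline{g(z)}$ forces $\log g(\bar z)=\overline{\log g(z)}$, so the resulting $f\in\mathcal{SR}_\R(\Omega)$ satisfies $\exp_*f=\exp f=g$.

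For $(b)$ there are two subcases. If $\bS Z(\omega)=\Omega$ then by Proposition~\ref{zerosets1}(ii) $\Omega$ is a product domain with $g_v^s\equiv0$, hence $g^s=g_0^2$ and $g_0$ is nowhere zero; part $(a)$ (no real points, so no condition) gives $\log_{*,m,0}g_0\in\mathcal{SR}_\R(\Omega)$, and since $(g_v/g_0)^{*2}=0$ one has $\exp_*(g_v/g_0)=1+g_v/g_0$, so because a slice--preserving summand $*$-commutes with everything (Proposition~\ref{regularitygp}), $\exp_*(\log_{*,m,0}g_0+g_v/g_0)=g_0(1+g_v/g_0)=g$, with the sum in $\mathcal{SR}_\omega(\Omega)$. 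If instead $Z(\omega)=\varnothing$, then $w^s\not\equiv0$ for a minimal representative (Proposition~\ref{zerosets1}) and, by Proposition~\ref{2normalized}, $\rho:=\sqrt{g_v^s}$ is a globally defined slice--preserving function with $(g_v/\rho)^s=1$; factor $g^s=(g_0+i\rho)(g_0-i\rho)$ on $\Omega_i$, note both factors are nowhere zero since $g^s$ is, pick a holomorphic square root $h$ of $g_0+i\rho$ on each component, and set $\sqrt{g^s}:=h\cdot h^*$ with $h^*(z):=\overline{h(\bar z)}$. This has the reflection property and equals $|h|^2>0$ on $\Omega_i\cap\R$, so $\sqrt{g^s}(\Omega\cap\R)\subset(0,+\infty)$ automatically and $f_0:=\log_{*,0,0}\sqrt{g^s}$ exists by $(a)$. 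Lift $G=(g_0/\sqrt{g^s},\rho/\sqrt{g^s})\colon\Omega_I\to\Sigma_I$ through the covering $T(q)=(\cos q,\sin q)$ (possible since the components of $\Omega_I$ are simply connected); any lift is real on $\Omega_I\cap\R$ (the only quaternions with $\cos q$ and $\sin q$ both real are the real ones), hence satisfies the reflection property, and on a product domain one reflects one component to the other. This defines a slice--preserving $\sqrt{f_v^s}$, and $f=f_0+f_v$ of Formula~\eqref{fff} (resp.~\eqref{fff1} when $\Omega\cap\R=\varnothing$) lies in $\mathcal{SR}_\omega(\Omega)$ with $\exp_*f=\exp(f_0)(\mu(f_v^s)+\nu(f_v^s)f_v)=g$ by the expansion~\eqref{structure}.

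For $(c)$ we may take $Z(\omega)$ discrete and nonempty (the empty case is $(b)$) and run the construction of Subsection~\ref{C4} globally. By~\eqref{counterex1} the slice--preserving function $g_0/\sqrt{g^s}$ takes values in $\tilde{M}_0=\H\setminus(-\infty,-1]$, the domain of $\mu_0^{-1}$, so $f_v^s:=\mu_0^{-1}(g_0/\sqrt{g^s})=(\arccos_0(g_0/\sqrt{g^s}))^2$ is globally defined, slice--preserving, vanishes on each sphere $\bS_{z_0}$ with $z_0\in Z(\omega)$, and takes values in $M_0$, which avoids $\{k^2\pi^2:k\ge1\}$, so $\nu(f_v^s)$ is nowhere zero. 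This pins $\sqrt{g^s}$ to the branch equal to $g_0$ at every zero of $g_v$ (well defined near those zeros and propagated by the identity principle, using that $g^s$ is nowhere zero on the basic domain $\Omega$), and by~\eqref{realimage1} the logarithm $f_0:=\log_{*,0,0}\sqrt{g^s}$ — or the branch of~\eqref{values} on components without real points — is globally defined. Then $f_v:=\exp_*(-f_0)\,g_v/\nu(f_v^s)$ is slice--regular since $\nu(f_v^s)$ is nowhere zero, $f=f_0+f_v\in\mathcal{SR}_\omega(\Omega)$, and by~\eqref{structure} one computes $\exp_*f=\sqrt{g^s}\bigl(g_0/\sqrt{g^s}+g_v/\sqrt{g^s}\bigr)=g$, which is Formula~\eqref{case4i}.

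The main obstacle is the globalization itself: one must verify that the slicewise logarithms and, above all, the lift of $G$ through the covering $T$ exist — this is exactly where the basic domain hypothesis (simple connectedness of the components of $\Omega_I$) is used in an essential way — and that all of them can be chosen compatibly with the reflection property, so that they descend to honest slice--preserving slice--regular functions on $\Omega$. The two delicate points are the automatic positivity of the forced branch of $\sqrt{g^s}$ on $\Omega\cap\R$ in case $(b)$ with $Z(\omega)=\varnothing$ (the $h\cdot h^*$ device), and the verification in $(c)$ that~\eqref{realimage1}--\eqref{counterex1} keep $\sqrt{g^s}$ positive on the real axis and $g_0/\sqrt{g^s}$ off $(-\infty,-1]$ throughout $\Omega$ — the obstructions that the counterexample of the final section shows cannot be dropped.
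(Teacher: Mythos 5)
Your proposal is correct and follows the same route as the paper's proof: both arguments globalize the four local cases of Section~\ref{section_cases}, using simple connectedness of the components of $\Omega_I$ to produce holomorphic logarithms and the lift through the covering $T=(\cos,\sin)$, and the reflection property $F(\bar z)=\overline{F(z)}$ to descend slicewise constructions to slice--preserving slice--regular functions. You depart from the paper in two localized places, both in the direction of self-containedness. In part $(a)$, for a slice domain you normalize the holomorphic logarithm to be real on the interval $\Omega\cap\R$ (where $g>0$) and invoke uniqueness of branches on a connected set to obtain $\log g(\bar z)=\overline{\log g(z)}$ in one stroke; the paper instead patches a principal branch defined near $\Omega\cap\R$ with branches on the two components of $\Omega_I\setminus\R$, adjusted by $2k\pi I$, and then reflects --- the two arguments are equivalent, yours being slightly more direct. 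In part $(b)$ with $Z(\omega)=\varnothing$, the paper obtains the global square root $\sqrt{g^s}$ by citing Proposition 1.6 of \cite{AdF}, whereas your factorization $g^s=(g_0+i\rho)(g_0-i\rho)$ on $\Omega_i$ together with $\sqrt{g^s}:=h\cdot h^*$, $h^2=g_0+i\rho$, $h^*(z)=\overline{h(\bar z)}$, is an explicit substitute which in addition shows that the chosen branch is automatically positive on $\Omega\cap\R$ --- a useful remark, since it explains why no analogue of condition \eqref{realimage1} appears in $(b)$; you still rely on Proposition~\ref{2normalized}, hence ultimately on \cite{AdF}, for $\sqrt{g_v^s}$. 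Part $(c)$ coincides with the paper's argument; like the paper, you leave implicit the consistency check that the symmetrization of $f_v:=\exp_*(-f_0)\,g_v/\nu(f_v^s)$ really equals the function $f_v^s=\mu_0^{-1}(g_0/\sqrt{g^s})$ used to define it, which follows from the identity $\mu(z)^2+z\,\nu(z)^2=1$.
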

\setcounter{section}{6}

\subsubsection{Proof of Theorem \ref{general} $(a)$}
The conditions in $(a)$ correspond to Case 1 presented in subsection \ref{C1}.
Assume that $\Omega$ is a basic product domain. This implies that the imaginary
unit function $\cI$ is well-defined. In each leaf $\C_I^+$ the set
$\Omega_I^+:=\C_I^+ \cap \Omega$ is simply connected. Assume that $g
\in \cS\mathcal{R}_{\R}(\Omega)$ is a nonzero function. Then
$g_I^+:=g|_{\Omega_I^+} \rightarrow \C_I$ is holomorphic and therefore it
has a holomorphic logarithm $f_I^+:=\log g_I^+.$ Because $g$ is also
slice--preserving, we can define $\log_{-I}g_{-I}^+(z)=
\ol{\log_{I}(g_{I}^+(\ol{z}))}$ and extend the logarithm to $\Omega.$
Denote this extension by $f = \log_*g.$ Similarly, the whole family of
logarithmic functions $f_{k} = \log_*((-1)^k g) + k \pi \mathcal{I}$ is also
well--defined. Notice that it is essential for this construction that
the imaginary unit function ${\mathcal I}$ exists.

Next, assume that $\Omega$ is  a basic slice
domain. Then
in each leaf $\C_I^+$ the set $\Omega_I^+:=\C_I^+ \cap
\Omega$ is simply connected and the intersection $\Omega_{\R}:=\Omega \cap \R$ is
connected. Assume that $g \in \cS\mathcal{R}_{\R}(\Omega)$ is a nonzero
function satisfying $g(\Omega \cap \R) \subset (0,\infty).$ Let
$\Omega_0$ be a connected component of $g^{-1}(g(\Omega) \cap (\H
\setminus (-\infty, 0]))$ which contains the set $\Omega_{\R}.$ Since the image $g(\Omega_0)$ does not intersect the negative real axis,
the function $f_0 = \log_{*,0,0} g$ is
well--defined on $\Omega_0$ and it is the unique logarithm as explained in Section~\ref{section_cases}.

If $\Omega = \Omega_0$ the problem is solved
so assume that $\Omega \ne \Omega_0.$
Then $\Omega_0$ is an open neighbourhood of an interval $\Omega_{\R}.$ The set $\Omega_1:= \Omega \setminus \Omega_{\R}$ is also connected and basic, but $\Omega_{1,I}:=\Omega_1 \cap \C_I$ has two connected components,  $\Omega_{1,I\pm}.$
Choose the component $\Omega_{1,I+}.$  Since it is simply connected, the function $g$ has a complex logarithm $f_+$ on $\Omega_{1,I+}.$
 On the intersection of their domains of definition (which is an open connected set), the functions $f_0$ and $f_{+}$ differ by $2 \pi k I,$ $f_0 = f_+ + 2 k \pi I.$  Redefine $f_+$ to be $f_+ + 2 k \pi I$ and define $f_{-}$ to be the Schwarz reflection of the function of $f_+.$ Since $f_0$ is slice--preserving,
$f_0(\ol{z}) = \ol{f_0(z)},$ the reflected function coincides with $f_0$ on the intersection of domains of definition and hence defines a function $f$ on $\Omega_I,$ which satisfies $f(\ol{z}) = \ol{f(z)}.$ By the extension formula, the function $f$ can be extended to a slice--preserving function on $\Omega.$

\subsubsection{Proof of Theorem \ref{general} $(b)$}

The first condition in $(b),$ $\omega \ne 0,$ $Z(\omega) =\varnothing,$ corresponds to Case 3 presented in Subsection \ref{C3}.
{
The function $g^s$ is nonvanishing,  the function $g_v^s$ has isolated real or spherical zeroes with even multiplicities and $\Omega$ is a basic domain, which are precisely the conditions of  Proposition $1.6$ in \cite{AdF}, which states, that under these conditions, the square roots $\sqrt{g^s}$ and $\sqrt{g_v^s}$ can be globally defined on $\Omega$.
}
Moreover, the normalized vectorial class
$$
  \frac{g_v}{\sqrt{g_v^s}}=:w
$$
is globally well-defined and nonzero on $\Omega.$ Therefore
Formulae (\ref{fff}) and (\ref{fff1}) are  globally valid and the logarithm exists.

The second  condition in $(b),$ $\omega \ne 0,$ $\bS Z(\omega) = \Omega$ and hence $\Omega \subset \H \setminus \R,$  corresponds to Case 2 presented in subsection \ref{C2}.
As already mentioned, when in Case 2, the basic domain $\Omega$ does not intersect the real axis and $g_0$ is not vanishing in $\Omega$.
Then, for $m \in \Z$,  one can define  $$\log_{*,2m,0 \cdot [g_v]}g:= \log_{*,2m,0} g_0  + \dfrac{g_v}{g_0}$$
since from the previous considerations $\log_{*,2m,0} g_0$
is well--defined on $\Omega.$ 

\subsubsection{Proof of Theorem \ref{general} $(c)$}
The condition in $(c),$ $\omega \not\equiv 0,$ $Z(\omega)$ is discrete, corresponds to Case 4 presented in Subsection \ref{C4}. The logarithm $\log_*\sqrt{g^s}$ exists by Case 1.
The assumptions imply that $\mu_0^{-1}(g_0/\sqrt{g^s})$ is well-defined on $\Omega$ and that $g_0(z) = \sqrt{g^s}(z)$ for every zero $z$ of $g_v^s,$ because $-1$ is not in the image of  $g_0/\sqrt{g^s}.$ Hence the logarithm is given by  Formula
(\ref{case4i}).\qed

 \begin{remark} Notice that in the hypotheses of case (c) of Theorem~\ref{general}, the stated sufficient conditions are always fulfilled on  ``small" basic product domains that are neighbourhoods of  a (non real) 
   $z_0 \in Z(\omega)$ (For instance on any set $\bS B^4(z_0,r)$ with small enough $r> 0$).
   \end{remark}

Since, by definition, every set $\mathcal{SR}_{\omega}(\Omega)$ contains also the set $\mathcal{SR}_{\R}(\Omega),$  Theorem \ref{general} $(b)$ yields  the following
\begin{corollary} Let $\Omega$ be a basic domain, $g \in \mathcal{SR}_{\R}(\Omega)$ and let $\omega$ be a vectorial class in $\Omega$ with
  $Z(\omega) \cap \Omega = \varnothing.$ Then there exists a logarithmic function of $g$
  in the class $\mathcal{SR}_{\omega}(\Omega),$ denoted by $\log_*g.$ 
\end{corollary}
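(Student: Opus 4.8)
The plan is to read the statement off part~$(b)$ of Theorem~\ref{general}, using the inclusion $\mathcal{SR}_{\R}(\Omega)\subseteq\mathcal{SR}_{\omega}(\Omega)$ recorded in Section~\ref{vectclass}. First I would observe that, by Definition~\ref{zerosets2}, the zero set $Z(\omega)$ of a vectorial class on $\Omega$ is contained in $\Omega$, so the hypothesis $Z(\omega)\cap\Omega=\varnothing$ is just $Z(\omega)=\varnothing$. The case $\omega=[0]$ is precisely Theorem~\ref{general}$(a)$, with its necessary and sufficient condition on $g|_{\Omega\cap\R}$, so the substance of the corollary lies in the case $\omega\neq[0]$; there $g\in\mathcal{SR}_{\R}(\Omega)\subseteq\mathcal{SR}_{\omega}(\Omega)$, and since $Z(\omega)=\varnothing$ the hypotheses of the first alternative of Theorem~\ref{general}$(b)$ hold, so a $*$-logarithm $\log_*g\in\mathcal{SR}_{\omega}(\Omega)$ exists with no further condition. (One assumes throughout that $g$ is nonvanishing, which is forced anyway since $\exp_*$ never vanishes.)

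It is worth exhibiting the logarithm explicitly, since the Case~3 formulae~\eqref{fff}--\eqref{fff1} used in the proof of Theorem~\ref{general}$(b)$ carry the factor $g_v/\sqrt{g_v^s}$, a $0/0$ expression when $g=g_0$ is slice--preserving. To sidestep this I would argue straight from the polar representation~\eqref{polar}. Because $\omega\neq[0]$ and $Z(\omega)=\varnothing$, Proposition~\ref{2normalized} provides a normalized minimal representative $w$ of $\omega$ on the whole of $\Omega$, with $w^s\equiv1$. Since $g$ is slice--preserving and nonvanishing, $g(\Omega\cap\R)$ lies either in $(0,+\infty)$ --- vacuously so when $\Omega$ is a product domain --- or in $(-\infty,0)$, the latter only when $\Omega$ is a basic slice domain. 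In the first case $f:=\log_{*,0,0}(g)\in\mathcal{SR}_{\R}(\Omega)\subseteq\mathcal{SR}_{\omega}(\Omega)$ already works, by Theorem~\ref{general}$(a)$. In the second case $(-g)(\Omega\cap\R)\subset(0,+\infty)$, so $\log_{*,0,0}(-g)$ is a well-defined slice--preserving function, and I would set $f:=\log_{*,0,0}(-g)+\pi w$; then $f_v^s=\pi^2 w^s=\pi^2$, hence $\sqrt{f_v^s}=\pi$, and \eqref{polar} gives $\exp_*f=(-g)(\cos\pi+\sin\pi\cdot w)=(-g)(-1)=g$, while $f\in\mathcal{SR}_{\omega}(\Omega)$ because its vectorial part $\pi w$ represents $\omega$ and its scalar part $\log_{*,0,0}(-g)$ is slice--preserving. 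Adding any $2\pi nw$, $n\in\Z$, produces the whole family of $\omega$-logarithms of $g$.

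The only delicate point --- and hence the ``main obstacle'' in what is otherwise an immediate deduction --- is exactly this degeneration $g_v\equiv0$: the general Case~3 construction has to be replaced by the elementary computation above, which relies only on~\eqref{polar} (equivalently on the splitting~\eqref{expexp} of $\exp_*$ over a slice--preserving scalar part, together with $\exp_*(\pi w)=-1$). Everything else is bookkeeping: choosing the correct branch of the scalar logarithm according to the sign of $g$ on $\Omega\cap\R$, and tracking the vectorial part so that $f$ stays in $\mathcal{SR}_{\omega}(\Omega)$.
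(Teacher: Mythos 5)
Your proposal is correct, and its first paragraph reproduces the paper's entire proof: the paper derives the corollary in one line from the inclusion $\mathcal{SR}_{\R}(\Omega)\subseteq\mathcal{SR}_{\omega}(\Omega)$ together with the first alternative of Theorem~\ref{general}$(b)$. Where you genuinely diverge is in your second paragraph, and the divergence is to your credit. The paper's proof of Theorem~\ref{general}$(b)$ in the relevant alternative (Case~3, Subsection~\ref{C3}) builds the vectorial direction as $w=g_v/\sqrt{g_v^s}$ and declares Formulae~\eqref{fff}--\eqref{fff1} globally valid; but for the $g$ of this corollary one has $g_v\equiv 0$, hence $g_v^s\equiv 0$, that quotient is $0/0$, and the hypotheses invoked there (that $g_v^s$ has isolated zeroes of even multiplicity) fail outright. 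The paper tacitly relies on replacing the undefined $g_v/\sqrt{g_v^s}$ by a normalized minimal representative $w$ of $\omega$ (Proposition~\ref{2normalized}); you make this substitution explicit and then check directly from the polar representation~\eqref{polar} (equivalently the splitting~\eqref{expexp}) that $f=\log_{*,0,0}(\pm g)+\varepsilon\pi w$, with $\varepsilon\in\{0,1\}$ fixed by the sign of $g$ on $\Omega\cap\R$, satisfies $\exp_*f=g$, the identity $\exp_*(\pi w)=\cos\pi+\sin\pi\cdot w=-1$ absorbing the sign. Your route therefore buys a self-contained, airtight treatment of precisely the degenerate case the corollary is about, at the cost of a few extra lines; the paper buys brevity at the cost of citing a construction that does not literally apply to slice--preserving $g$. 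Two small points if you keep your version: in the slice-domain case you implicitly use that $g$ has constant sign on $\Omega\cap\R$, which holds because a conjugation-symmetric simply connected $\Omega_I$ meets $\R$ in an interval; and on a product domain the family $f+2\pi n w$ is not quite ``the whole family'' of $\omega$-logarithms, since the terms $m\pi\mathcal{I}+n\pi w$ with $m+n$ even also occur --- though this does not affect the existence statement being proved.
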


\subsection{The case of one isolated non real zero} \label{6.2}

For the case of a slice--regular function defined on a basic product domain, and whose vectorial class has only an isolated zero, we can - as announced - produce a formula for the $*$-logaritms.

\begin{theo}\label{isolated} Let $g \in \cS\mathcal{R}_{\omega}({\Omega})$ be a nonvanishing function and $\Omega$ be a basic product domain. Let   $Z(\omega) \cap \Omega =\{z_0\}$ and let $\sqrt{g^s}$ be such that $\sqrt{g^s}(z_0) = g_0(z_0).$  Then there exist a logarithmic function $f$ of $g$, $f \in \mathcal{SR}_{\omega}(\Omega),$  given by the formula
\begin{equation}
  f=\log_{*,m,0 \cdot[g_v]} g =  \log_{*,m,0} ((-1)^m \sqrt{{g^s}}) +  \frac{g_v}{\nu\left(\mu^{-1}\left(g_0/ \sqrt{g^s}\right) \right)\sqrt{g^s}},
\end{equation}
 where  $\log_{*,m,0}=\log_{*,0,0}  + m\pi \mathcal{I}$ for $m \in \Z$ and
\begin{enumerate}
\item[(a)]  $m$ is even if $ \sqrt{g^s}(\bS_{z_0})\subset \H \setminus (-\infty,0)$ or odd if $\sqrt{g^s}(\bS_{z_0})\subset  (-\infty,0)$,
\item[(b)] the function \[\mu^{-1}\left(g_0/ \sqrt{g^s}\right)\]
 is the lift of the function $g_0/ \sqrt{g^s}$ with respect to the mapping $\mu$ such that
\[\mu^{-1}\left(g_0/ \sqrt{g^s}\right) (z_0) = 0.\]
\end{enumerate}
\end{theo}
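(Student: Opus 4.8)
The plan is to recognise the statement as the global counterpart of the construction of Case~4 (Subsection~\ref{C4}), in the special situation of a basic \emph{product} domain on which the vectorial class $\omega$ has exactly one zero. Concretely, I would show that each of the three ingredients appearing in Formula~\eqref{case4i} — the slice--preserving square root $\sqrt{g^s}$, the branch $\log_{*,m,0}((-1)^m\sqrt{g^s})$ of its logarithm, and the lift $\mu^{-1}(g_0/\sqrt{g^s})$ — is globally defined on $\Omega$, and then deduce $\exp_* f=g$ from the polar representation~\eqref{polar}.

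First the routine ingredients. Since $\Omega$ is a product domain, $\Omega\subset\H\setminus\R$ and the imaginary unit function $\mathcal{I}$ is globally defined and slice--preserving on $\Omega$, with $\exp_*(m\pi\mathcal{I})=(-1)^m$. As $g$ is nonvanishing, its symmetrization $g^s=g*g^c$ is nonvanishing too (its zero set is the union of the spheres meeting $Z(g)$), hence $g^s$ is a nonvanishing slice--preserving function on the basic domain $\Omega$ and admits a slice--preserving square root. Factoring out, if necessary, a slice--preserving factor as in Remark~\ref{onlyrealzeros} (and using Remark~\ref{minimalrep}), we may assume $g_v$ is a minimal representative of $\omega$, so $g_v^s$ vanishes exactly on $\bS_{z_0}$ and $g^s(z_0)=g_0^2(z_0)\neq0$; then exactly one of $\pm\sqrt{g^s}$ equals $g_0(z_0)$ at $z_0$, and we take $\sqrt{g^s}$ to be that one. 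Since $\sqrt{g^s}$ and $g_0$ are slice--preserving and agree at $z_0$, they agree on all of $\bS_{z_0}$, so $g_0/\sqrt{g^s}\equiv1$ on $\bS_{z_0}$. Finally, because $\Omega$ is a product domain, the argument used in the proof of Theorem~\ref{general}$(a)$ in the product case (which requires no positivity hypothesis) furnishes a slice--preserving logarithm of $\sqrt{g^s}$ and of $-\sqrt{g^s}$; its branches are the slice--preserving functions $\log_{*,m,0}((-1)^m\sqrt{g^s})$, all satisfying $\exp_*\bigl(\log_{*,m,0}((-1)^m\sqrt{g^s})\bigr)=\sqrt{g^s}$, and I would fix the parity of $m$ as prescribed in~(a), so that the branch is the one dictated at $\bS_{z_0}$ by~\eqref{values}.

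The heart of the matter is the global construction of $f_v^s:=\mu^{-1}(g_0/\sqrt{g^s})$ subject to $f_v^s(z_0)=0$. Put $h:=g_0/\sqrt{g^s}$, a well-defined (no poles, since $g^s\neq0$) slice--preserving slice--regular function. The key observation is that $h$ takes a value in $\{\pm1\}$ precisely where $g_v^s$ vanishes, that is — with $g_v$ minimal — exactly on $\bS_{z_0}$, where $h\equiv1$; hence $h$ omits $-1$ on all of $\Omega$ and $h^{-1}(1)=\bS_{z_0}$. Thus $h$ restricts to a slice--preserving map $\Omega\setminus\bS_{z_0}\to\H\setminus\{\pm1\}$, which I would lift through the slice--covering map $\mu\colon\H\setminus\{k^2\pi^2 : k\in\bN\cup\{0\}\}\to\H\setminus\{\pm1\}$ of Subsection~\ref{properties_mu}. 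On any slice $\C_I$, each connected component of $(\Omega\setminus\bS_{z_0})\cap\C_I$ is a simply connected domain with one point of $\bS_{z_0}$ removed, hence has fundamental group generated by a small loop around that puncture; the $h$--image of this loop is a small loop around the value $1$, and since $\mu$ extends to a local diffeomorphism across $0$ with $\mu(0)=1$ (Subsection~\ref{properties_mu}), it lifts to a closed loop on the sheet through $0$, regardless of its winding number. Choosing on each slice the lift tending to $0$ near $\bS_{z_0}$, the monodromy is therefore trivial, so the lift is single--valued; it is compatible across the two half--slices by Schwarz reflection and thus defines a slice--preserving function on $\Omega\setminus\bS_{z_0}$, which — tending to $0$ along $\bS_{z_0}$ — extends slice--regularly across $\bS_{z_0}$ with value $0$ there. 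This is $f_v^s$; by construction $\mu(f_v^s)=g_0/\sqrt{g^s}$, $f_v^s(z_0)=0$ and $f_v^s\equiv0$ on $\bS_{z_0}$. I expect this step to be the main obstacle, as it is here that the two hypotheses (product domain; $Z(\omega)$ a single point) are genuinely used, and one must verify that the puncture along $\bS_{z_0}$ creates no monodromy — which it does not, precisely because $1$ is an unramified value of $\mu$. This also explains why the condition~\eqref{counterex1} of Theorem~\ref{general}$(c)$ is unnecessary here: with a single zero the normalisation $\sqrt{g^s}(z_0)=g_0(z_0)$ already forces $h\neq-1$ on $\Omega$.

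It remains to set $f:=\log_{*,m,0}((-1)^m\sqrt{g^s})+\dfrac{g_v}{\nu(f_v^s)\sqrt{g^s}}$ and check the last two points. First, $\nu(f_v^s)$ is nonvanishing on $\Omega$: it vanishes only where $f_v^s\in\{k^2\pi^2:k\ge1\}$, hence where $h=\mu(f_v^s)=\cos(k\pi)\in\{\pm1\}$, hence only on $\bS_{z_0}$, where $f_v^s=0$ and $\nu(0)=1$; so the second summand is pole--free, and as it is a slice--preserving multiple of $g_v$ while the first summand is slice--preserving, $f\in\mathcal{SR}_{\omega}(\Omega)$. Second, from $\mu(f_v^s)=g_0/\sqrt{g^s}$ and $\sin^2=1-\cos^2$ one gets $\nu(f_v^s)^2\,g^s\,f_v^s=g_v^s$, so the symmetrization of the vectorial part of $f$ is again $f_v^s$ and~\eqref{polar} applies; since $\exp_*$ of the first summand is $\sqrt{g^s}$ (and using~\eqref{munu}--\eqref{munu2}),
\[
\exp_* f=\sqrt{g^s}\,\bigl(\mu(f_v^s)+\nu(f_v^s)\,\tfrac{g_v}{\nu(f_v^s)\sqrt{g^s}}\bigr)=\sqrt{g^s}\,\bigl(\tfrac{g_0}{\sqrt{g^s}}+\tfrac{g_v}{\sqrt{g^s}}\bigr)=g_0+g_v=g .
\]
The parity prescription~(a) and the normalisation~(b) are exactly what single out the two displayed formulae of the statement.
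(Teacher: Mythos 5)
Your proposal is correct and follows the same overall strategy as the paper's proof: reduce to Formula~(\ref{case4i}) of Case~4, obtain the slice--preserving logarithm of $\sqrt{g^s}$ from the product--domain case of Theorem~\ref{general}$(a)$, construct a global lift of $g_0/\sqrt{g^s}$ through the slice--covering $\mu$, and transfer it to $\Omega$ by Schwarz reflection and the extension formula. The one step where you genuinely diverge is the proof that the lift is single--valued on each punctured half--slice $\Omega_{I,+}\setminus\{z_0\}$: you compute the monodromy directly, observing that the generator of $\pi_1$ maps under $g_0/\sqrt{g^s}$ to a loop in a small disc about $1$, which closes up under $\mu_0^{-1}$ because $1$ is an unramified value of $\mu$ attained at $0$. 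The paper instead slits $\Omega_{I,+}$ along an arc from $z_0$ to the boundary, lifts over the resulting simply connected domain, and patches across the slit with a chain of discs anchored at the local lift $\mu_0^{-1}$ near $z_0$. Both arguments rest on exactly the same fact (local invertibility of $\mu$ at $0$ together with $(g_0/\sqrt{g^s})^{-1}(\{\pm1\})\cap\Omega=\bS_{z_0}$, where the value is $+1$); yours is shorter and makes transparent why the hypothesis $Z(\omega)\cap\Omega=\{z_0\}$ plus the normalisation $\sqrt{g^s}(z_0)=g_0(z_0)$ renders condition~(\ref{counterex}) automatic, while the paper's is more constructive. Your closing verification that $\nu(f_v^s)^2\,g^s\,f_v^s=g_v^s$ and hence $\exp_*f=g$ via~\eqref{polar} is a welcome explicit check that the paper leaves implicit in its reference back to Case~4. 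One small caveat, shared with the paper: the identification of $(g_0/\sqrt{g^s})^{-1}(\{\pm 1\})$ with $\bS_{z_0}$ uses that $g_v^s$ vanishes only on $\bS_{z_0}$, i.e.\ that the slice--preserving factor $\lambda$ in $g_v=\lambda w$ is nonvanishing; your phrase ``we may assume $g_v$ is minimal'' replaces the given $g$ by a different function, so strictly speaking this reduction should be stated as an implicit hypothesis rather than performed, but the paper's own proof makes the same tacit assumption.
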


\begin{proof}
The logarithm of $\sqrt{g^s}$ exists by Theorem \ref{general} $(a)$, because $\Omega \subset \H \setminus \R.$ The  function
$\sqrt{g^s}$ is such that not only $\sqrt{g^s}(z_0)=g_0(z_0)$ but also  $\sqrt{g^s}(z)=g_0(z)$ for every $z \in \bS_{z_0}.$ Indeed,
on $\bS_{z_0}$ we have $\sqrt{g^s}(z)=\sqrt{g_0^2}(z)$ and since  $\sqrt{g_0^2}(z_0) = g_0(z_0),$ the same holds on the whole sphere $\bS_{z_0}.$
We have to show that the lift of  the function  $g_0/\sqrt{g^s}$ via $\mu$ can be defined on $\Omega.$ First observe that $(g_0/\sqrt{g^s})^{-1}(1) \cap \Omega = \bS_{z_0}.$ Let $I$ be such that $z_0 \in \C_{I,+}$ and choose an arc $l_{I,+}$ connecting $z_0$ to the boundary of 
$\Omega_{I,+}:=\Omega \cap \C_{I,+}.$ Let $l_{I,-}$ be the reflected arc. The domain $\Omega_{I,+} \setminus l_{I,+}$ is simply connected and
$$
  (g_0/\sqrt{g^s})(\Omega_{I,+} \setminus l_{I,+}) \subset \C_I \setminus \{\pm 1\}.
$$
Since the map $\mu$ is a slice--covering map from $\H \setminus \{k^2 \pi^2, k \in \bN\}$ to $\H\setminus \{\pm 1\},$ the lift $G$ of $g_0/\sqrt{g^s}$  exists,
$$\xymatrix
   {  & \C_I \setminus \{k^2 \pi^2, k \in \bN\} \ar[dr]^{\mu} &  \\
      \Omega_{I,+}  \ar[ur]^{G} \ar[rr]^{g_0/\sqrt{g^s}}&    & \C_I \setminus \{\pm 1\}
   }
$$
and can be chosen in such a way that $\lim_{z \ra z_0} G (z) = 0.$
Cover the arc $l_{I,+}$ with a (possibly infinite) chain of discs $D_i$ such that $z_0 \in D_0$ and each $D_i$ intersects only $D_{i-1}$ and $D_{i+1}$ and intersections are connected.
Let $G_0:= \mu_0^{-1}(g_0/\sqrt{g^s})$ near $z_0.$ The lifts $G$ and $G_0$ coincide on $D_0 \setminus l_{I,+}$ and hence define
a lift on the union $D_0 \cup (\Omega_{I,+} \setminus l_{I,+})$ which we also denote by $G.$ Since $z_0$ is the only point with value $g_0/\sqrt{g^s}(z) = 1$ in $\Omega_{I,+},$ we can  choose a lift $G_1$ on $D_1$ so that it matches $G_0$ on $D_0 \cap D_1.$ Since $D_1 \cap (D_0 \cup \Omega_{I,+})$ is connected, by the lifting property, the lift $G_1$ also matches the lift $G$ on $D_1 \setminus l_{I,+}.$ Repeating this procedure extends the lift $G$ to $\Omega_{I,+}.$ Notice that $G(z) \ne k^2\pi^2, k \in \bN$ and so $\nu(G) \ne 0.$  Extend the definition of $G$ to $\Omega_{I,-}$ by Schwarz reflection and then use the extension formula to get a slice--preserving function, which we denote - with a slight abuse of notation - by $\mu^{-1}(g_0/\sqrt{g^s}).$
The logarithm is now given by  Formula
(\ref{case4i}).\end{proof}

\begin{example}{\em
Consider the function
$$
  g(z) = -1 + z^2i + \sqrt{2} z j + k
$$ defined on $\mathbb{H}.$ Because $g^s = 1 + (z^2 + 1)^2,$ the
  zeroes of $g^s$ on $\C_I$ are $z_{1,2} = \sqrt{\pm I - 1}$ and $z_{3,4} = -\sqrt{\pm I - 1}$.
  Hence these zeroes
  lie on the sphere with radius $r = 2^{1/4} > 1.1,$ so $g$
  is nonvanishing in the ball $ \Omega = B^4(0,1.1).$
  A simple
  calculation shows that $g^s$ maps $\Omega_I$ to a cardioid-shaped
  domain in the right half-plane of $\H,$ so the image misses the
  negative real axis, hence there exists a unique logarithmic function
  of $g^s,$ namely $\log_0g^s.$ Since the domain $\Omega$ intersects
  the real axis, the necessary condition for logarithm of $\sqrt{g^s}$
  to exist is (\ref{cond1}), $\sqrt{g^s}(\Omega \cap \R) \subset
  (0,+\infty),$ therefore the only possibility for the definition of
  $f_0$ is to take the principal branch of the square root and set
\[f_0:=\log_{*,0,0}\sqrt{g^s}=\dfrac{1}{2} \log_{*,0,0}{g^s}.\]
 The vectorial function $g_v$ has only $z_0=\frac{k-i}{\sqrt 2} j$  as the unique (double) zero and  the symmetrization of $g_v$ is
 $g_v^s = (z^2 + 1)^2$ on $\Omega.$
Unfortunately, $g(z_0) =g_0(z_0)= -1$ and $\sqrt{g^s}(z_0) = 1$ and hence condition (\ref{counterex}) no longer holds, which makes it impossible to define
the functions $f_v$ and $f_v^s$ near the point $z_0,$ because $-1$ is a branching point for $\mu^{-1}.$
Notice that the function $g$  meets the hypotheses of Theorem \ref{general} $(c),$ but does not fulfil one of the stated sufficient conditions,
namely condition (\ref{counterex}).
  
}
\end{example}


\begin{thebibliography}{99}
\bibitem[AdF]{AdF} {\sc A. Altavilla, C. de Fabritiis}, {\em *-exponential of slice regular functions}, Proc. Amer. Math. Soc. 147, 1173-1188, 2019.
\bibitem[AdF1]{AdF1} {\sc A. Altavilla, C. de Fabritiis}, {\em *-logarithm for slice regular functions}, arxiv.org/pdf/2106.04227.pdf
\bibitem[CGS]{CGS}{\sc F. Colombo, J. O. Gonzalez-Cervantes, I. Sabadini}, {\em The C-property for slice regular functions and applications to the Bergman space}, Compl. Var. Ell. Eq., 58, n. 10, 1355--1372, 2013.
\bibitem[CSS]{CSS} F. Colombo, I. Sabadini, D. C. Struppa, {\em Entire Slice Regular Functions}, SpringerBriefs in Mathematics, Springer, 2016.
\bibitem[GHS]{GA} K. G\"urlebeck, K. Habetha, W. Spr\"ossig,  {\em Holomorphic Functions in the Plane and n-dimensional
Space}, Birkh\"auser Verlag, Basel (2008).
\bibitem[GP]{GP} {\sc R. Ghiloni, A. Perotti}, {\em Slice regular functions of several Clifford variables},
Proceedings of ICNPAA 2012 - Workshop ``Clifford algebras, Clifford analysis and their applications'', AIP Conf. Proc. 1493, 734-738, 2012
\bibitem[GP1]{GP1} R. Ghiloni, A, Perotti, {\em Slice regular functions on real alternative algebras}, Adv. Math. {226},1662--1691, 2011. DOI: 10.1016/j.aim.2010.08.015
\bibitem[GPV]{paper-2}{\sc G.Gentili, J.Prezelj, F. Vlacci}, {\em Slice--conformality: Riemann manifolds and logarithm on quaternions and octonions}, arXiv:2107.07892 [math.CV].
\bibitem[GPV1]{paper-3}{\sc G.Gentili, J.Prezelj, F. Vlacci}, {\em On a continuation of quaternionic logarithm along curves and the winding number}, preprint 2021.
  \bibitem[GSS]{GSS} {\sc G. Gentili, C. Stoppato, D. Struppa}, {\em Regular functions of a quaternionic variable}, Springer Monographs in Mathematics, Springer, Heidelberg, 2013.
\bibitem[GS]{GS} {\sc G. Gentili, D. Struppa}, {\em A new theory of regular functions of a quaternionic variable}, Adv. Math., { 216}, 279--301, 2007.




  \end{thebibliography}
\end{document}